\theoremstyle{plain}
\newtheorem{theorem}{Theorem}[section]
\newtheorem{proposition}[theorem]{Proposition}
\newtheorem{lemma}[theorem]{Lemma}
\newtheorem{corollary}[theorem]{Corollary}
\theoremstyle{definition}
\newtheorem{definition}{Definition}[section]
\theoremstyle{remark}
\newtheorem*{example*}{Example}
\newenvironment{jegnote}[1]{\vspace{1em}\par \noindent \textit{Note{#1}:}}{\vspace{1em}}
\renewcommand{\labelenumi}{\textit{\theenumi}\textup{)}}
\newcommand{\Ad}{\mathrm{Ad}}
\newcommand{\Adj}[2]{{\Ad_{#1}(#2)}}
\newcommand{\bantipode}{\underline{\;\,}\mkern-8mu{S}}
\newcommand{\bigdsum}{\bigoplus}
\newcommand{\bcoproduct}{\underline{\;\;}\mkern-12mu{\coproduct}}
\newcommand{\bcounit}{\underline{\;\,}\mkern-8mu{\varepsilon}}
\newcommand{\brprim}[1]{\underline{\mathrm{Prim}}( #1 )}
\newcommand{\breta}{\underline{\;\,}\mkern-8mu{\eta}}
\newcommand{\btensor}{\mathrel{\underline{\;\;}\mkern-12mu{\tensor}}}
\newcommand{\Cdot}{\mathrel{\cdot}}
\newcommand{\Cdotp}{\mathrel{\cdot'}}
\let\chisave\chi
\renewcommand{\chi}{{%
 \mathchoice{\raisebox{0.25ex}{$\displaystyle\chisave$}}
            {\raisebox{0.2ex}{$\textstyle\chisave$}}
            {\raisebox{0.2ex}{$\scriptstyle\chisave$}}
            {\raisebox{0.1ex}{$\scriptscriptstyle\chisave$}}}}
\newcommand{\complex}{\ensuremath \mathbb{C}}
\newcommand{\coproduct}{\ensuremath \Delta}
\newcommand{\counit}{\ensuremath \varepsilon}
\newcommand{\cross}{\times}
\newcommand{\curly}[1]{\ensuremath{\mathcal{#1}}}
\newcommand{\DBos}[3]{#1 \rtimesdot #2 \ltimesdot #3}
\newcommand{\dbxsum}{\bowtie}
\newcommand{\defeq}{\stackrel{\scriptscriptstyle{\mathrm{def}}}{=}}
\newcommand{\dsum}{\ensuremath{ \oplus}}
\newcommand{\dual}[1]{\ensuremath {#1}^{*}}
\renewcommand{\epsilon}{\varepsilon}
\newcommand{\Hom}[3]{\ensuremath \mbox{Hom}_{#1}(#2,#3)}
\newcommand{\id}{\ensuremath \mbox{\textup{id}}}
\newcommand{\Image}[1]{\ensuremath \mbox{Im}\>{#1}}
\newcommand{\inj}{\hookrightarrow}
\newcommand{\integ}{\ensuremath{\mathbb{Z}}}
\newcommand{\intersection}{\mathrel{\cap}}
\newcommand{\inv}[1]{\ensuremath {#1}^{-1}}
\newcommand{\ip}[2]{\ensuremath \lgen\;\!#1,#2\;\!\rgen}
\newcommand{\iso}{\ensuremath \cong}
\newcommand{\laction}{\triangleright}
\newcommand{\lgen}{\ensuremath \mathopen{<}} 
\newcommand{\ltimesdot}{\mathrel{\cdot\joinrel\mkern-14.3mu\rhd\mkern-8.7mu<}}
\newcommand{\Lbracket}[2]{\ensuremath{ [\, #1 , #2\, ]}}
\newcommand{\Lie}[1]{\ensuremath{\mathfrak{#1}}}
\newcommand{\Mor}[3]{\text{Mor}_{#1}(#2 , #3 )}
\newcommand{\mult}[2]{\mathrm{mult}_{#1}( #2 )}
\newcommand{\nat}{\ensuremath \mathbb{N}}
\newcommand{\onto}{\twoheadrightarrow}
\newcommand{\op}[1]{{#1}^{\mbox{\scriptsize \textup{op}}}}
\renewcommand{\phi}{\varphi}
\newcommand{\qbin}[3]{{\genfrac{[}{]}{0pt}{0}{#1}{#2}}_{#3}}
\newcommand{\qbracket}[3]{\ensuremath{ [\, #1 , #2\, ]_{#3}}}
\newcommand{\qea}[1]{\ensuremath{U_{q}(\Lie{#1})}}
\newcommand{\qeaminus}[1]{\ensuremath{U_{q}^{-}(\Lie{#1})}}
\newcommand{\qeaplus}[1]{\ensuremath{U_{q}^{+}(\Lie{#1})}}
\newcommand{\qeb}[1]{\ensuremath{U_{q}^{{\scriptscriptstyle \leqslant}}(\Lie{#1})}}
\newcommand{\qebO}[1]{\ensuremath{U_{q}^{{\scriptscriptstyle \leqslant}}(\Lie{#1})_{[0]}}}
\newcommand{\qebplus}[1]{\ensuremath{U_{q}^{{\scriptscriptstyle \geqslant}}(\Lie{#1})}}
\newcommand{\qint}[2]{{\left[ #1 \right]}_{#2}}
\newcommand{\raction}{\triangleleft}
\newcommand{\rgen}{\ensuremath \mathclose{>}}
\newcommand{\rip}[2]{\mathopen{(} #1,#2 \mathclose{)}}
\renewcommand{\rtimes}{\mathrel{>\joinrel\mkern-6mu\lhd}}
\newcommand{\rtimesdot}{\mathrel{>\joinrel\mkern-6mu\lhd\mkern-7.2mu\cdot}}
\newcommand{\tensor}{\ensuremath \otimes}
\newcommand{\too}{\longrightarrow}
\newcommand{\union}{\mathrel{\cup}}
\newcommand{\weight}[2]{\mathop{\mathrm{wt}}_{ #1 }( #2 )}
\newcommand{\cf}{cf.\ }
\newcommand{\ie}{i.e.\ }
\newcommand{\eg}{e.g.\ }
\title{Braided enveloping algebras associated to \\ quantum parabolic subalgebras}
\author{Jan E. Grabowski\footnotemark[2] 
\\ \small{\textit{Mathematical Institute, University of Oxford}} \\ \small{\textit{24-29 St.\ Giles', Oxford, OX1 3LB, United Kingdom}}}
\date{27th July 2010}
\begin{document}

\maketitle

\renewcommand{\thefootnote}{\fnsymbol{footnote}}
\footnotetext[2]{Email: \url{jan.grabowski@maths.ox.ac.uk}.  Website: \url{http://people.maths.ox.ac.uk/~grabowsk/}}
\renewcommand{\thefootnote}{\arabic{footnote}}
\setcounter{footnote}{0}

\begin{abstract} 
\noindent Associated to each subset $J$ of the nodes $I$ of a Dynkin diagram is a triangular decomposition of the corresponding Lie algebra $\Lie{g}$ into three subalgebras $\widetilde{\Lie{g}_{J}}$ (generated by $e_{j}$, $f_{j}$ for $j\in J$ and $h_{i}$ for $i\in I$), $\Lie{n}^{-}_{D}$ (generated by $f_{d}$, $d\in D=I\setminus J$) and its dual $\Lie{n}_{D}^{+}$.

We demonstrate a quantum counterpart, generalising work of Majid and Rosso, by exhibiting analogous triangular decompositions of $\qea{g}$ and identifying a graded braided Hopf algebra that quantizes $\Lie{n}_{D}^{-}$.  This algebra has many similar properties to $\qeaminus{g}$, in many cases being a Nichols algebra and therefore completely determined by its associated braiding.
\end{abstract}

\noindent \footnotesize \textbf{Keywords:} quantized enveloping algebra, braided Hopf algebra, Nichols algebra \normalsize \hspace*{\fill} \linebreak
\noindent \footnotesize \textbf{Mathematics Subject Classification (2000):} 17B37 (Primary), 20G42 \normalsize

\section{Introduction}\label{s:intro}

It is now twenty-five years since the study of quantum groups began in earnest and much of the attention in the area has been focused on the quantized enveloping algebras introduced by Drinfel\cprime d (\cite{DrinfeldHAQYBE}) and Jimbo (\cite{Jimbo}) and their structure as illuminated particularly by Lusztig (\cite{LusztigBook}).  However, a significant amount of development has taken place in other settings inspired by quantum theory, especially the study of non-commutative versions of classical algebraic and geometric objects obtained by introducing braidings and braided categories.  It has long been known that aspects of the theory of quantized enveloping algebras have natural statements in the language of braided categories.  Conversely, when studying braided structures, one finds Lie-theoretic type information---particularly Cartan matrices---appearing very naturally.  One recent example would be the work of Andruskiewitsch and Schneider (\cite{AndrSchnPHA}) and others on pointed Hopf algebras.  In the present work, the relationship between quantized enveloping algebras and Hopf algebra structures in braided categories is examined further.

In a series of papers, Majid (\cite{MajidInductive},\cite{MajidNewConstr},\cite{DoubleBos}) has introduced a construction for Hopf algebras called double-bosonisation.  A special case has also been defined by Sommerh\"{a}user (\cite{Sommerhauser}).  The input is a Hopf algebra and two braided Hopf algebras in duality; the output is a new Hopf algebra.  This construction is on the one hand related to particular biproducts of a Hopf algebra and a braided Hopf algebra, called bosonisations.  Bosonisations are semi-direct products of Hopf algebras and double-bosonisations are a form of triple product built from two bosonisations.  On the other hand, double-bosonisation is modelled on and generalises triangular decompositions of the type seen in Lie theory and its quantum counterpart.  Indeed, in his original work Majid showed that the triangular decomposition of a quantized enveloping algebra into positive, negative and Cartan parts is one way of expressing the quantized enveloping algebra as a double-bosonisation.  (This may be found in \cite{DoubleBos}; an extended exposition is in \cite[Chapters 17-19]{QGP}.)

Along with establishing the double-bosonisation as a Hopf algebra and also exhibiting the quantized enveloping algebras as an example in the manner just discussed, Magid (\cite{MajidInductive}) introduced the idea that double-bosonisation allows for an alternative approach to the study of quantized enveloping algebras.  One can think of double-bosonisation as realising in the algebraic structure the addition of nodes to Dynkin diagrams and so as allowing the inductive construction of quantized enveloping algebras.  In particular, he saw that the inductive construction along the $A$ series of Dynkin diagrams can be achieved using braided (hyper-)planes.  These are among the simplest of the braided Hopf algebras and are in some sense non-commutative vector spaces.  He went on to observe that one is not restricted to the $A$ series and a more general consideration was possible.  

At around the same time, Rosso (\cite{Rosso}) considered a similar construction, using quantum symmetric algebras over irreducible modules to add a single node to a Dynkin diagram, now not just of type $A$.  In this work, we take this up and provide a more formal analysis, showing that this idea of induction indeed applies very generally: not just in the corank one case, nor just along the Dynkin series or even just in finite type but for arbitrary root data, where the associated modules need not be irreducible or finite-dimensional, and that Nichols algebras (now the more commonly used term for quantum symmetric algebras) are precisely what is needed to make this sort of inductive construction work.

The principal aim of this work was to extend that of Majid (\cite{BraidedLie},\cite{DoubleBos}) and our own (\cite{LieInduction}) to the quantum setting, so giving the most general setting for the above ideas of Majid and Rosso.  In addition, relatively few examples of infinite-dimensional braided Hopf algebras are well understood, which was another motivation for our work.  

We briefly summarize the classical version of the idea presented here.  Associated to every subset $J$ of the set of nodes $I$ of a Dynkin diagram is a standard parabolic subalgebra $\Lie{p}_{J}$ of the corresponding Lie algebra $\Lie{g}$, generated by the positive Borel subalgebra of $\Lie{g}$ together with the negative simple generators $f_{j}$, $j\in J$.  Furthermore one has a decomposition of $\Lie{g}$ as a semi-direct product of $\Lie{p}_{J}$ and the subalgebra $\Lie{n}^{-}_{D}$ generated by the remaining negative simple generators $f_{d}$, $d\in D=I\setminus J$.  One also has $\Lie{g}_{J}$, the Lie algebra generated by the $e_{j}$ and $f_{j}$ with $j\in J$.  In \cite{LieInduction} and \cite{BraidedKacMoody}, we showed that $\Lie{g}$ has a semi-direct product decomposition into three subalgebras $\Lie{n}_{D}^{-}$, a central extension of $\Lie{g}_{J}$ and $\Lie{n}_{D}^{+}$ (dual to $\Lie{n}_{D}^{-}$).  This generalises the usual triangular decomposition into negative, Cartan and positive parts, which is of course a special case.

Lusztig's approach to quantized enveloping algebras starts with a root datum $\mathfrak{T}$, an abstraction of the notion of a root system for Lie algebras.  The corresponding quantized enveloping algebra will be denoted here by $\qea{T}$.  In order to have an abstract description of the choice of a standard parabolic subalgebra we define a relation between pairs of root data which we call a sub-root datum (Definition~\ref{s:defnofB:subrootdatum}), denoted $\mathfrak{J} \subseteq_{\iota} \mathfrak{T}$.  The conditions of the definition require that the Dynkin diagram of $\mathfrak{J}$ is a sub-diagram of that of $\mathfrak{T}$ but also impose further compatibility constraints.

The analogue of a negative Borel subalgebra in the quantum setting, denoted $\qeb{T}$, has a natural $\nat$-grading coming from the sub-root datum $\mathfrak{J} \subseteq_{\iota} \mathfrak{T}$ and the Radford--Majid theorem applies to give us a braided Hopf algebra $B=B(\mathfrak{T},\mathfrak{J},\iota,q)$.  We see that the zeroth component of this grading is a semi-direct product of $\qeb{J}$, the quantum negative Borel subalgebra associated to the `smaller' root datum, by a group Hopf algebra.  Then we see that $\qea{T}$ is indeed a double-bosonisation of $\qea{J}$ (Theorem~\ref{s:defnofB:isotoDB}).  Note that the usual triangular decomposition is obtained by considering the inclusion of the trivial (rank zero) datum in a given root datum.

In Section~\ref{s:Bstr} we analyse the algebra, module and coalgebra structures of $B$.  We give a set of generators for $B$ (Theorem~\ref{s:Bstr:alggensofB}), show that its first homogeneous component $B_{1}$ is a direct sum of quotients of Weyl modules and show that $B$ is integrable.  In the generic situation, this should be viewed as the dual of the quantum version of the Pl\"{u}cker embedding for partial flag varieties and their big cells.

Using the description of the generators of $B$, we prove that if $B_{1}$ is finite-dimensional then $B$ is a Nichols algebra (Theorem~\ref{s:Bstr:BisNichols}).  That is, $B$ is a graded braided Hopf algebra generated in degree one with all its braided-primitive elements also in degree one.  We refer the reader to the survey of Takeuchi (\cite{Takeuchi}) for more information on general Nichols algebras.

Restricting to the finite-type case, one question remains: does the graded dual of $B$ have a natural non-commutative geometric interpretation?  An affirmative answer to this has been given by Kolb (\cite{Kolb}) who has shown that the graded dual of the quotient $\qea{g}/\qea{p_{\mathit{J}}}$ is isomorphic to the quantized coordinate ring $\curly{O}_{q}[N^{-}_{D}]$ (where $N^{-}_{D}$ is the opposite unipotent radical to the parabolic subgroup $P_{J}$ of the group $G$, all constructed in analogy to their Lie counterparts described above).  That is, the object of our study here, $B$, should be regarded as the quantized enveloping algebra of $\Lie{n}_{D}^{-}$.

\section{Preliminaries}\label{s:prelims}

Throughout, we will use the following convention for the natural numbers: $\nat= \{ 0, 1, 2, \ldots \}$, setting $\nat^{*}= \{ 1, 2, 3, \ldots \}$.  That is, for us $\nat$ is a monoid.

Recall that a braided category is a monoidal category together with a natural isomorphism $\Psi\colon -\tensor - \to -\mathrel{\op{\tensor}} -$ (where $A \mathrel{\op{\tensor}} B=B\tensor A$), satisfying suitable identities (see for example \cite{MacLane} or \cite{QGP}).  One can consider objects in categories with various sorts of algebraic structures on them.  That is, one takes an object together with some morphisms from the category that satisfy the axioms for the appropriate algebraic structure, when we translate axioms into identities of morphisms.  A key example is that of a Hopf algebra in a braided category, namely an object $B$ and morphisms $\underline{m},\breta,\bcoproduct,\bcounit$ and $\bantipode$ satisfying the usual relations for an algebra product, unit, Hopf algebra coproduct, counit and antipode respectively.  In particular $\bcoproduct\colon B \to B \btensor B$ is required to be a morphism of braided algebras from $B$ to the braided tensor product algebra $B\btensor B$ (where the usual tensor product multiplication is twisted by $\Psi$, the braiding in the category).  We will also use the term `braided Hopf algebra' for a Hopf algebra in a braided category.

If $B=(B,\underline{m},\breta,\bcoproduct,\bcounit,\bantipode)$ is a braided Hopf algebra, we say $b \in B$ is braided-primitive if $\bcoproduct b = b \btensor 1 + 1 \btensor b$.  We will denote the vector space of braided-primitive elements of $B$ by $\brprim{B}$.  We note that $\brprim{B}$ is not in general a subalgebra of $B$.

Now we consider graded Hopf algebras.  Let $(M,+)$ be a commutative monoid, with identity element denoted $0$, and let $k[M]$ be the associated monoid algebra over a field $k$.  An $M$-graded $k$-Hopf algebra $H=\bigdsum_{m\in M} H_{m}$ is a $k$-Hopf algebra in the category of right $k[M]$-comodules, $\mathcal{M}^{k[M]}$.  (It is straightforward to recover the usual formulation of a grading for \eg $M=\integ$.)

Nichols algebras, also called Nichols--Woronowicz algebras were introduced in Nichols' thesis (\cite{Nichols}); Woronowicz (\cite{Woron}) and others have independently re-discovered them.

\begin{definition}[{\cite{AndrSchnPHA}}]\label{s:prelims:d:Nicholsalg}
A Nichols algebra is an $\nat$-graded braided $k$-Hopf algebra $B=\dsum_{n\in \nat} B_{n}$ such that:
\renewcommand{\labelenumi}{\textup{(\hspace{-1pt}}\textit{\alph{enumi}}\textup{\hspace{-0.5pt})}\ }
\begin{enumerate}
\setlength{\itemsep}{0.5pt}
\item $B_{0}=k$,
\item $B_{1}=\brprim{B}$, and
\item $B$ is generated as an algebra by $B_{1}$.
\end{enumerate}
\renewcommand{\labelenumi}{\textit{\roman{enumi}}\textup{)}\ }
\end{definition}
  
Some examples of Nichols algebras arise as braided versions of the classical symmetric or exterior algebras.  Others, notably those in this work, are analogues of enveloping algebras.  A good introduction to Nichols algebras may be found in \cite{AndrNichols}.

\begin{lemma}\label{s:prelims:l:Nicholsifdualdeg1gen}
Let $R=\dsum_{n \in \nat} R_{n}$ be a graded $k$-Hopf algebra in a braided category $\curly{C}$ with finite-dimensional homogeneous components.  Assume that $S=\dsum_{n\in \nat} \dual{R_{n}}$, the graded dual of $R$, is also a Hopf algebra in the braided category $\curly{C}$ (with the dual Hopf algebra structures).  Further, assume that $R_{0}=k$, so then $S_{0}=k$ also.  Then $R_{1}=\brprim{R}$ if and only if $S$ is generated as an algebra by $S_{1}$. \qed
\end{lemma}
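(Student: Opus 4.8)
The plan is to use the standard graded-duality pairing between braided-primitives and indecomposables, working component by component. Write $R_{+}=\bigdsum_{n\geqslant 1}R_{n}$ for the augmentation ideal and recall that, because $R_{0}=k$, the product map $R_{+}\tensor R_{+}\to R_{+}$ lands in $\bigdsum_{n\geqslant 2}R_{n}$ and $\brprim{R}\subseteq R_{+}$. First I would identify, for each $n$, the subspace of decomposables $(R_{+}R_{+})_{n}=\sum_{i+j=n,\,i,j\geqslant 1}R_{i}R_{j}$ and observe that $R$ is generated in degree one precisely when $R_{n}=(R_{+}R_{+})_{n}$ for all $n\geqslant 2$; dually, the condition $R_{1}=\brprim{R}$ says there are no primitives above degree one (there are none in degree zero since $R_{0}=k$, and the degree-one piece is automatically primitive by the grading, as $\bcoproduct R_{1}\subseteq (R\btensor R)_{1}=R_{0}\btensor R_{1}\oplus R_{1}\btensor R_{0}$). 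So the statement to prove is: $R$ has no braided-primitives in degrees $\geqslant 2$ iff $S$ is generated by $S_{1}$, and symmetrically with $R$ and $S$ swapped.

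The key step is to show that, for each $n\geqslant 2$, the non-degenerate pairing $\langle\,,\rangle\colon R_{n}\tensor S_{n}\to k$ restricts to a perfect pairing between $R_{n}/(R_{+}R_{+})_{n}$ and $\brprim{S}\cap S_{n}$, i.e.\ that the annihilator of $(R_{+}R_{+})_{n}$ inside $S_{n}$ is exactly the space of degree-$n$ braided-primitives of $S$. This is the formal dualisation of "coproduct is dual to product": for $\xi\in S_{n}$ and $x\in R_{i}$, $y\in R_{j}$ with $i+j=n$, $i,j\geqslant 1$, one has $\langle xy,\xi\rangle=\langle x\tensor y,\bcoproduct\xi\rangle$, and only the $S_{i}\tensor S_{j}$-component of $\bcoproduct\xi$ contributes. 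Hence $\xi$ annihilates $(R_{+}R_{+})_{n}$ iff every "mixed" component $\bcoproduct\xi|_{S_{i}\btensor S_{j}}$ ($i,j\geqslant 1$) vanishes, which (since the $S_{n}\btensor S_{0}$ and $S_{0}\btensor S_{n}$ components are forced to be $\xi\btensor 1$ and $1\btensor\xi$ by counitality together with $S_{0}=k$) is exactly the condition $\bcoproduct\xi=\xi\btensor 1+1\btensor\xi$. Here finite-dimensionality of the homogeneous components is what guarantees the pairings are perfect and that $(R\btensor R)_{n}^{*}$ decomposes as $\bigdsum_{i+j=n}\dual{R_{i}}\tensor\dual{R_{j}}=\bigdsum_{i+j=n}S_{i}\btensor S_{j}=(S\btensor S)_{n}$, so that "the dual of the braided coproduct is the braided product of $S$" makes sense on each component; this is where I would lean on the hypothesis that $S$ is genuinely a Hopf algebra in $\curly{C}$ with the dual structures.

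Given this, the two conditions match up: $R$ is generated in degree one $\iff R_{n}=(R_{+}R_{+})_{n}$ for all $n\geqslant 2$ $\iff$ the annihilator of $(R_{+}R_{+})_{n}$ in $S_{n}$ is zero for all $n\geqslant 2$ $\iff \brprim{S}\cap S_{n}=0$ for all $n\geqslant 2$ $\iff \brprim{S}=S_{1}$. Running the same argument with the roles of $R$ and $S$ interchanged (legitimate since the graded dual of $S$ is $R$ again, by finite-dimensionality of components) gives the reverse implication, completing the equivalence. The main obstacle I anticipate is purely bookkeeping rather than conceptual: one must be careful that the braiding $\Psi$ is threaded consistently through the identification $(R\btensor R)^{*}\cong S\btensor S$ — i.e.\ that dualising the braided tensor product coalgebra really does yield the braided tensor product algebra on the dual, with the \emph{same} $\Psi$ — so that "$\bcoproduct$ on $R$ dual to $\underline{m}$ on $S$" is an identity of morphisms in $\curly{C}$ and not just of underlying linear maps; once that compatibility is pinned down, the degree-by-degree argument above is routine.
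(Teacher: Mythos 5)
Your argument is correct and is essentially the proof the paper invokes: the paper gives no proof of its own but defers to Lemma~5.5 of Andruskiewitsch--Schneider, whose content is exactly your degree-by-degree identification of the annihilator of the decomposables $(R_{+}R_{+})_{n}$ with the braided-primitives $\brprim{S}\intersection S_{n}$, using finite-dimensionality of the components and the hypothesis that $S$ carries the dual braided Hopf structure. The only cosmetic slip is that your main chain of equivalences already proves the statement with $R$ and $S$ interchanged, so the single swapped run of the argument yields the full biconditional rather than merely a ``reverse implication.''
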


\noindent This lemma is proved exactly as Lemma~5.5 of \cite{AndrSchnFinQGps}, from which it is derived; the proof there uses only properties of the graded braided Hopf algebra structures.

\subsection{The bosonisation constructions for Hopf algebras}\label{s:prelims:ss:HAbos}

Bosonisation and double-bosonisation are the two key constructions which make an inductive approach to the study of the quantized enveloping algebras possible.  Bosonisation takes a Hopf algebra and a braided Hopf algebra in its category of modules and combines these to obtain a new (ordinary) Hopf algebra.  Double-bosonisation incorporates the dual of the braided Hopf algebra as well and again produces a Hopf algebra.

However, bosonisation and double-bosonisation require an additional condition on the initial Hopf algebra $H$ which forms the input into the constructions.  This condition is the existence of a weak quasitriangular structure on $H$ and $H'$ dually paired to $H$.  The existence of a weak quasitriangular structure is, as the name suggests, a weaker condition than quasitriangularity.

\begin{definition}[\cf {\cite{MajidBiXprod}}]\label{s:prelims:weakqt}
Let $H$ and $H'$ be dually paired $k$-Hopf algebras, paired by the map $\ip{\ }{\ }\colon H \tensor H' \to k$.  A weak quasitriangular system consists of $H$, $H'$ and a pair of convolution-invertible algebra and anti-coalgebra maps $\curly{R}$, $\bar{\curly{R}} \colon H' \to H$, with convolution-inverses $\inv{\curly{R}}$, $\inv{\bar{\curly{R}}}$ respectively, such that
\begin{enumerate}
\item $\ip{\bar{\curly{R}}(\phi)}{\psi}=\ip{\inv{\curly{R}}(\psi)}{\phi}$ for all $\psi,\, \phi \in H'$ and
\item $\curly{R}$ and $\bar{\curly{R}}$ intertwine the left and right coregular actions $\dual{L}$, $\dual{R}$ with respect to the convolution product ${}\Cdot{}$ on $\Hom{k}{H'}{H}$:
\begin{align*} & \dual{L}(h)(a) \defeq \ip{h_{(1)}}{a}h_{(2)},\ \dual{R}(h)(a) \defeq h_{(1)}\ip{h_{(2)}}{a} \\
& \dual{R}(h) = \curly{R} \Cdot \dual{L}(h) \Cdot \inv{\curly{R}} \\
& \dual{R}(h) = \bar{\curly{R}} \Cdot \dual{L}(h) \Cdot \inv{\bar{\curly{R}}}
\end{align*} where we consider $\dual{L}\colon H' \tensor H \to H$ as a map $\dual{L}(h)\colon H' \to H$ by fixing $h \in H$ (similarly for $\dual{R}$).
\end{enumerate}
\end{definition}

\noindent We will denote by $WQ(H,H',\curly{R},\bar{\curly{R}})$ a weak quasitriangular system with the above data.

We can now define the bosonisation construction.  This was originally introduced in \cite{MajidCrossProducts}, where the claims implicit in the definition are proved, and it is noted in \cite{DoubleBos} that one needs only weak quasitriangularity for the construction to work.  We have altered the presentation slightly to reflect our use of the definition of a weak quasitriangular system.

\begin{definition}\label{s:prelims:bosonisation}
Let $WQ(H,H',\curly{R},\bar{\curly{R}})$ be a weak quasitriangular system and let $B$ be a Hopf algebra in the braided category of right $H'$-comodules, $\curly{M}^{H'}$.  Let the right coaction be denoted $\beta \colon B \to B \tensor H'$, $\beta(b)=b^{(1)} \tensor b^{(2)}$.  Then the bosonisation of $B$, denoted $B \rtimesdot H$, is the Hopf algebra with
\begin{enumerate}
\setlength{\itemsep}{0.5pt}
\item underlying vector space $B \tensor H$,
\item semi-direct product by the action\/ $\laction$ given by evaluation against the right coaction of $H'$:
\begin{align*} & (b \tensor h)(c \tensor g) = b(h_{(1)} \laction c) \tensor h_{(2)}g \\
& h \laction b = b^{(1)}\ip{h}{b^{(2)}}\quad \forall\ h\in H,\ b\in B 
\end{align*}
\item semi-direct coproduct by the coaction $\alpha$ of $H$ induced by the right coaction of $H'$ and the weak quasitriangular structure:
\[ \coproduct(b \tensor h) = b_{\underline{(1)}} \tensor \curly{R}({b_{\underline{(2)}}}^{(2)})h_{(1)} \tensor {b_{\underline{(2)}}}^{(1)} \tensor h_{(2)} \]
\item tensor product unit and counit and
\item an antipode, given by an explicit formula which we omit.
\end{enumerate}
\end{definition}

\noindent This is the left-handed version of bosonisation; the right-handed version is entirely analogous and is denoted $H \ltimesdot B$.  Double-bosonisation is defined by combining a left and a right bosonisation, with some cross relations.  A large part of \cite{DoubleBos} is devoted to showing that this is well-defined and produces a Hopf algebra.

\begin{definition}[{\cf \cite{DoubleBos}}]\label{s:prelims:dbosdefn}
Let $WQ(H,H',\curly{R},\bar{\curly{R}})$ be a weak quasitriangular system and let $B$ be a Hopf algebra in the braided category of right $H'$-comodules, $\curly{M}^{H'}$.  Let $B'$ be another Hopf algebra in the braided category $\curly{M}^{H'}$ with an invertible braided antipode dually paired with $B$ via $\mathrm{ev}\colon B \tensor B' \to k$, a dual pairing of the Hopf algebra structures in this category.  Then the double-bosonisation $\DBos{B}{H}{\op{(B')}}$ of $B$ and $\op{(B')}$ by $H$ is the Hopf algebra with
\begin{enumerate}
\item underlying vector space $B \tensor H \tensor B'$,
\item sub-Hopf algebras $B \rtimesdot H\ (\equiv B \rtimesdot H \tensor 1)$, $H \ltimesdot \op{(B')}$, and
\item cross-relations 
\[ b_{\underline{(1)}}\curly{R}({b_{\underline{(2)}}}^{(2)})c_{\underline{(1)}}\mathrm{ev}({c_{\underline{(2)}} \tensor b_{\underline{(2)}}}^{(1)} ) 
= \mathrm{ev}( {c_{\underline{(1)}}} \tensor {b_{\underline{(1)}}}^{(1)} )c_{\underline{(2)}}{\bar{\curly{R}}}({b_{\underline{(1)}}}^{(2)}){b_{\underline{(2)}}} \]
for all $b \in B$, $c\in \op{(B')}$.
\end{enumerate}
\end{definition}

\subsection{Root data and quantized enveloping algebras}\label{s:prelims:ss:rootdata+QEAs}

We follow Lusztig (\cite{LusztigBook}) in working with Cartan data and root data.  Consider a root datum $\mathfrak{T}=(I,{}\Cdot{},Y,X,\ip{\ }{\ },i_{1}\colon I \inj Y,i_{2}\colon I\inj X)$.  That is, $I$ is a finite set, ``${}\Cdot{}$'' is a symmetric bilinear form on $\integ[I]$ giving rise to an associated Cartan matrix $C$, $Y$ and $X$ are two finitely generated free Abelian groups perfectly paired by $\ip{\ }{\ }$ and $i_{1}$, $i_{2}$ are inclusions of $I$ into $Y$ and $X$ such that $\ip{i_{1}(i)}{i_{2}(j)}=C_{ij}$.  The root lattice is embedded into $X$, the weight lattice in $Y$.  We set $c_{i} \defeq \frac{i\Cdot i}{2}$.

Throughout, we will restrict our consideration to $q \in \dual{k}$ such that $q$ is not a root of unity, although we allow our base field $k$ to have arbitrary characteristic.  (In doing so, we will be making use of the non-degeneracy of certain forms and the validity of this in this generality is described in \cite[Chapter~8]{Jantzen}.)  Let $q_{i} \defeq q^{c_{i}}$ and let $\qint{a}{i}$ denote the $a$th symmetric $q_{i}$-integer and $\qbin{n}{k}{i}$ the corresponding $q_{i}$-binomial coefficient (see \eg \cite[Chapter~0]{Jantzen}).

Let the identity element of $Y$ be denoted $0$ and let $Z$ denote the free Abelian subgroup $\integ [i_{1}(I)]$ of $Y$.  We consider $I$ as a subset of $Z$, suppressing the map $i_{1}$.  We can now define the quantized enveloping algebra $\qea{T}$ associated to the root datum $\mathfrak{T}$ over the field $k$ with deformation parameter $q$.

\begin{definition}\label{s:prelims:qea}
We define $\qea{T}$ to be the Hopf algebra over $k$ generated by $E_{i}$, $F_{i}$ \textup{(}$i\in I$\textup{)} and $K_{\mu}$ \textup{(}$\mu \in Z$\textup{)}, subject to relations
\begin{description}
\item[(R1)] $K_{0}=1$, $K_{\mu}K_{\nu}=K_{\mu+\nu}$
\item[(R2)] $K_{\mu}E_{i}=q^{\ip{\mu}{i_{2}(i)}}E_{i}K_{\mu}$
\item[(R3)] $K_{\mu}F_{i}=q^{-\ip{\mu}{i_{2}(i)}}F_{i}K_{\mu}$
\item[(R4)]
  $\displaystyle{E_{i}F_{j}-F_{j}E_{i}=\delta_{ij}\frac{H_{i}-\inv{H_{i}}}{q_{i}-\inv{q_{i}}}}$, where $H_{i} \defeq K_{i}^{c_i}$
\item[(R5)] $\displaystyle\sum_{m=0}^{1-C_{ij}} (-1)^{m}\qbin{1-C_{ij}}{m}{i} E_{i}^{1-C_{ij}-m}E_{j}E_{i}^{m}=0$, for $i\neq j$
\item[(R6)] $\displaystyle\sum_{m=0}^{1-C_{ij}} (-1)^{m}\qbin{1-C_{ij}}{m}{i} F_{i}^{1-C_{ij}-m}F_{j}F_{i}^{m}=0$, for $i\neq j$
\end{description}

\noindent The Hopf structure is: \rule{0pt}{1.45em} 
\begin{align*}
  & \coproduct E_{i}   = E_{i}\tensor 1 + H_{i}\tensor E_{i}       & & \counit(E_{i})   = 0 & & SE_{i}   = -\inv{H_{i}}E_{i} \\
  & \coproduct F_{i}   = F_{i}\tensor \inv{H_{i}} + 1\tensor F_{i} & & \counit(F_{i})   = 0 & & SF_{i}   = -F_{i}H_{i} \\
  & \coproduct K_{\mu} = K_{\mu} \tensor K_{\mu}                           & & \counit(K_{\mu}) = 1 & & SK_{\mu} = \inv{K_{\mu}}
\end{align*}
\end{definition}

\begin{jegnote}{} 
\begin{enumerate}
\item There are several definitions of the quantized enveloping algebras in the literature and this one is close to that of Lusztig (\cite{LusztigBook}), except that he has generators $K_{\mu}$ with $\mu \in Y$, rather than referring to $Z$.  Our definition also resembles that of Jantzen (\cite{Jantzen}), although he starts with root systems, rather than root data.  The reason for the restriction to generators indexed by the subgroup $Z$ rather than $Y$ is technical and is discussed below.
\item Since $Z$ is finitely generated and we could define $\qea{T}$ using only $K_{i}$, $i \in I$, so this version of $\qea{T}$ is finitely generated.
\item Also useful will be the following relations, implied by \textbf{(R2)} and \textbf{(R3)}:
 \begin{description}
 \item[(R2\cprime )] $H_{i}E_{j} = q^{\ip{c_{i}i_{1}(i)}{i_{2}(j)}}E_{j}H_{i} =q^{c_{i}C_{ij}}E_{j}H_{i}  =q^{i\Cdot j}E_{j}H_{i}$
 \item[(R3\cprime )] $H_{i}F_{j}=q^{-i\Cdot j}F_{j}H_{i}$
 \end{description}
\end{enumerate}
\end{jegnote}

We will also need certain subalgebras of $\qea{T}$, generated by certain subsets of the generating set for $\qea{T}$, as follows:
\[ U_{q}^{0}(\mathfrak{T}) = \lgen K_{\mu} \mid \mu \in Z \rgen \qquad U_{q}^{+}(\mathfrak{T}) = \lgen E_{i} \mid i \in I \rgen \qquad U_{q}^{-}(\mathfrak{T}) = \lgen F_{i} \mid i \in I \rgen \]
\[ \qebplus{T} = \lgen E_{i},\ K_{\mu} \mid i \in I,\ \mu \in Z \rgen \qquad
\qeb{T} = \lgen F_{i},\ K_{\mu} \mid i \in I,\ \mu \in Z \rgen 
\]

\noindent These subalgebras are the quantized enveloping algebra analogues of the Cartan subalgebra, subalgebras of positive and negative root vectors and the positive and negative Borel subalgebras, respectively.

Unfortunately, $\qea{T}$ is not a quasitriangular Hopf algebra in general.  This is because the analogue of the Drinfel\cprime d-Sklyanin quasitriangular structure for Lie bialgebras involves an infinite sum, since $\qea{T}$ is infinite-dimensional.  There are several approaches to resolving this problem.  Drinfel\cprime d (\cite{DrinfeldICM}) works in the setting of formal power series in a deformation parameter; Lusztig (\cite[Chapter 4]{LusztigBook}) introduces a topological completion.  The notion of weak quasitriangularity (Definition~\ref{s:prelims:weakqt}) was introduced by Majid in order to avoid these and remain in a purely algebraic setting.  

In the context of constructing $\qea{T}$ as a double-bosonisation starting from the Hopf algebra $U_{q}^{0}(\mathfrak{T})=k[Z]$ (the group algebra of $Z$), it follows from \cite[Proposition 18.7]{QGP} that we have a weak quasitriangular system $WQ(k[Z],k[\integ[i_{2}(I)]],\curly{R},\bar{\curly{R}})$, as follows.  Let $\{ h_{i} \mid i \in I \}$ be a basis of $k[\integ[i_{2}(I)]]$.  Then $\curly{R}(h_{i})=H_{i}$ and $\bar{\curly{R}}(h_{i})=\inv{H_{i}}$.  To extend this to the whole of $\qea{T}$, we use Lusztig's pairing between $U_{q}^{+}(\mathfrak{T})$ and $U_{q}^{-}(\mathfrak{T})$, given by $\rip{E_{i}}{F_{j}}=\inv{(\inv{q_{i}}-q_{i})}\delta_{ij}$.  This induces dual bases $\{ f^{a} \}$ and $\{ e_{a} \}$ and we have the quasi-$\curly{R}$-matrix, \ie the formal series $\sum_{a} f^{a} \tensor e_{a}$.  Then the $\curly{R}$, $\bar{\curly{R}}$ are given by appropriate evaluations against the pairing $\rip{\ }{\ }$ and this is well-defined.  We will not give explicit formul\ae\ here.

Similarly, we obtain a self-duality pairing of $\qeb{T}$, as follows.  For $i,\; j \in I$, define  \[ \rip{K_{i}}{K_{j}} = q^{\ip{i_{1}(i)}{i_{2}(j)}}, \qquad \rip{F_{i}}{F_{j}} =-\inv{(q_{i}-\inv{q_{i}})}\delta_{ij}\quad \text{and} \quad \rip{K_{i}}{F_{j}} =\rip{F_{j}}{K_{i}}=0, \] extended to the whole of $\qeb{T}\tensor \qeb{T}$.  One proof that this is a dual pairing of Hopf algebras is in \cite[Chapter~6]{Jantzen}, where the pairing is expressed as a pairing of $\qeb{T}$ with $\op{\qebplus{T}}$.  (We identify $\op{\qebplus{T}}$ with $\qeb{T}$.)  As Jantzen observes, the idea goes back to Drinfel\cprime d.  It is in order to have this pairing that we index the generators of $U_{q}^{0}(\mathfrak{T})$ by $Z$ rather than $Y$.  For the root of unity case, we refer the reader to the work of De Concini and Lyubashenko (\cite{DeConciniLyubashenko}) and the book by Brown and Goodearl (\cite{Brown-Goodearl}).

We may construct the Drinfel\cprime d double of $\qeb{T}$, $D(\qeb{T})$.  We use a variant on the original definition \cite{DrinfeldICM} suitable for infinite-dimensional Hopf algebras described in \cite[Chapter 7]{FQGT}.  This is $\qeb{T} \tensor \qebplus{T}$ with a double cross product structure given by simultaneous actions of each factor on the other, denoted $\qeb{T} \dbxsum \qebplus{T}$.  Now following Drinfel\cprime d again, we may recover $\qea{T}$ as a quotient of $D(\qeb{T})$.  Observe that $D(\qeb{T})$ is generated by $\{ F_{i} \tensor 1,\ 1\tensor E_{i} \mid i \in I \} \union \{ K_{\mu} \tensor 1,\ 1\tensor K_{\mu} \mid \mu \in Z \}$.  Then the quotient $\qea{T}$ is obtained by identifying the two Cartan parts, \ie we impose the relation $K_{\mu} \tensor 1 = 1\tensor K_{\mu}$.  The corresponding ideal defining the quotient is generated by elements of the form $K_{\mu} \tensor \inv{K}_{\mu}-1\tensor 1$.  We will refer to the projection $\mathbb{P}\colon D(\qeb{T}) \onto \qea{T}$ as Drinfel\cprime d's projection\label{s:prelims:defn:Drinproj}.

We will not consider all representations of $\qea{T}$ but as usual concentrate on those modules that decompose into weight spaces.  Among the set of weights of a module, the dominant weights are particularly important.  We extend the definition of dominant in \cite[\S 3.5.5]{LusztigBook} slightly, as we will need to consider weights and their properties with respect to more than one quantized enveloping algebra.  So we define dominance relative to certain subsets of $I$, namely those whose image in the cocharacter lattice $Y$ is a linearly independent set.  As noted by Lusztig, one can define dominance without this linear independence but it is of no use.

\begin{definition}\label{s:prelims:Sdominant} Let $\mathfrak{T} = (I,{}\Cdot{},Y,X,\ip{\ }{\ },i_{1},i_{2})$ be a root datum.  For $\lambda \in X$ and any subset $S \subseteq I$ such that the set $\{ i_{1}(s) \mid s \in S \}$ is linearly independent in $Y$, we say $\lambda$ is $S$-dominant if $\ip{i_{1}(s)}{\lambda} \in \nat$ for all $s\in S$.
\end{definition}

For any weight $\lambda$, we have two important modules, the Verma module $\Delta(\lambda)$ and the Weyl module $L(\lambda)$ of highest weight $\lambda$.  We fix $\lambda \in X$ and then, as in \cite[Section 5.5]{Jantzen}, we first define the left ideal
\[ J_{\lambda} = \sum_{i \in I} \qea{T}E_{i} + \sum_{i \in I} \qea{T}(K_{i}-q^{\ip{i_{1}(i)}{\lambda)}}). \]
The Verma module is defined as $\Delta(\lambda) \defeq \qea{T}/J_{\lambda}$ and is generated by the coset of $1$, denoted $v_{\lambda}$; $\Delta(\lambda)$ has a unique maximal submodule.  The Weyl module $L(\lambda)$ is defined to be the unique simple factor of $\Delta(\lambda)$.

\subsection{Hopf algebra gradings and split projections}\label{ss:grandsplproj}

\renewcommand{\labelenumi}{\textit{\roman{enumi}}\textup{)}\ }

The following easy proposition relates $\nat$-gradings to split projections.  We remark that we make no assumptions on the Hopf algebra structure of $H_{0}$: it need not be a group algebra, for example.

\begin{proposition}\label{ss:grandsplproj:Ngdsplproj}
Let $H=\bigdsum_{n\in \nat} H_{n}$ be an $\nat$-graded $k$-Hopf algebra.  Then $H_{0}$ is a sub-Hopf algebra of $H$.  Let $\pi\colon H \onto H_{0}$ be defined by 
\[ \pi(H_{i}) = \begin{cases} \id|_{H_{0}} & \text{if}\ i=0 \\
                              0            & \text{otherwise.}
                \end{cases} \]
Then $\pi$ is a projection of $\nat$-graded Hopf algebras, split by the inclusion $\iota\colon H_{0} \inj H$.  By this, we mean that $\pi$, $\iota$ are morphisms in the category of $k[\nat]$-comodules and are Hopf algebra maps, such that $\pi$ is surjective, $\iota$ is injective and $\pi \circ \iota = \id_{H_{0}}$ (the splitting condition).  $H_{0}$ is $\nat$-graded in the obvious way: $(H_{0})_{0}=H_{0}$, $(H_{0})_{i}=0$ $(i>0)$. \qed
\end{proposition}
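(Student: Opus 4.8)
The plan is to verify each claimed property in turn, starting from the grading. First I would show $H_0$ is a sub-Hopf algebra. Since $H$ is $\nat$-graded as a Hopf algebra, the multiplication satisfies $H_iH_j\subseteq H_{i+j}$, so $H_0H_0\subseteq H_0$ and $H_0$ is a subalgebra containing $1$ (the unit lies in degree $0$ because $\eta\colon k\to H$ is a morphism of $k[\nat]$-comodules and $k$ sits in degree $0$). For the coalgebra side, being a Hopf algebra in $\mathcal{M}^{k[\nat]}$ means $\coproduct$ is a comodule map, i.e.\ it is compatible with the $\nat$-grading in the sense that $\coproduct(H_n)\subseteq\bigdsum_{i+j=n}H_i\tensor H_j$; taking $n=0$ gives $\coproduct(H_0)\subseteq H_0\tensor H_0$. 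Similarly $\counit$ and the antipode $S$ respect the grading, so $S(H_0)\subseteq H_0$, and $H_0$ inherits all the Hopf structure. The grading on $H_0$ with $(H_0)_0=H_0$ and $(H_0)_i=0$ for $i>0$ is then visibly a $k[\nat]$-comodule structure making $H_0$ an $\nat$-graded Hopf algebra.

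Next I would check that $\pi$ and $\iota$ are the required morphisms. The map $\iota$ is the inclusion of a sub-Hopf algebra, hence a Hopf algebra map, and it is a $k[\nat]$-comodule map because it sends the degree-$0$ part of $H_0$ into the degree-$0$ part of $H$. For $\pi$: it is exactly the projection onto the degree-$0$ component along the decomposition $H=H_0\dsum\bigdsum_{n>0}H_n$. That this is an algebra map uses $H_iH_j\subseteq H_{i+j}$: the product of two elements lands in degree $0$ only if both factors are combined from components whose degrees sum to $0$, and since degrees are in $\nat$ that forces both to be degree $0$, so $\pi(ab)=\pi(a)\pi(b)$; it preserves the unit since $1\in H_0$. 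It is a coalgebra map because $\coproduct(H_n)\subseteq\bigdsum_{i+j=n}H_i\tensor H_j$ implies $(\pi\tensor\pi)\coproduct$ kills all $H_n$ with $n>0$ and agrees with $\coproduct_{H_0}\pi$ on $H_0$; compatibility with $\counit$ is immediate since $\counit$ vanishes on $H_n$ for $n>0$ (again because $\counit$ is a comodule map to $k$ in degree $0$). Compatibility with the antipode follows from $S(H_n)\subseteq H_n$. Finally $\pi$ is surjective, $\iota$ is injective, and $\pi\circ\iota=\id_{H_0}$ by construction, and both are $k[\nat]$-comodule maps for the stated gradings.

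I do not expect a serious obstacle here; the only point requiring a little care is being explicit about what ``$\nat$-graded Hopf algebra'' means at the level of the coproduct, namely unwinding the definition of a Hopf algebra object in $\mathcal{M}^{k[\nat]}$ into the concrete statement $\coproduct(H_n)\subseteq\bigdsum_{i+j=n}H_i\tensor H_j$ and the analogous statements for $\counit$, $\eta$ and $S$. Once that translation is in hand, every assertion is a one-line consequence of the additivity of the degree function on $\nat$ together with the fact that $0$ is not a nontrivial sum in $\nat$. So the ``hard part'', such as it is, is purely bookkeeping about the monoidal structure of $\mathcal{M}^{k[\nat]}$, and the substance of the proposition is simply that an $\nat$-grading on a Hopf algebra automatically produces a split Hopf projection onto degree zero.
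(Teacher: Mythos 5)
Your proof is correct and is precisely the routine verification the paper intends: the proposition is stated with its proof omitted (the \qed appears at the end of the statement), and the argument being suppressed is exactly the one you give, namely unwinding the Hopf-algebra-in-$\mathcal{M}^{k[\nat]}$ axioms into $H_iH_j\subseteq H_{i+j}$, $\coproduct(H_n)\subseteq\bigdsum_{i+j=n}H_i\tensor H_j$, etc., and using that $i+j=0$ forces $i=j=0$ in $\nat$. You correctly identify that this last point is where the hypothesis of an $\nat$-grading (rather than, say, a $\integ$-grading) is essential.
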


Now we can use the well-known Radford--Majid theorem in the special case of $\nat$-graded Hopf algebras to see that we have both a Hopf algebra in a braided category associated to the grading and a bosonisation reconstructing our original Hopf algebra.

\begin{theorem}[\cf \cite{Radford},\cite{MajidBraidedMatrix}]\label{ss:grandsplproj:RadfordMajid}
Let $(H,H')$ be a dual pair of Hopf algebras with $H$ and $H'$ $\nat$-graded.  Assume $H'$ has an invertible antipode.  Let $H\genfrac{}{}{0pt}{}{\overset{\pi}{\onto}}{\underset{\iota}{\hookleftarrow}} H_{0}$ be the above split Hopf algebra projection.  Then there is a Hopf algebra $B$ in the braided category of $D(H_{0},H'_{0})$-modules such that $B \rtimesdot H_{0} \iso H$. \qed
\end{theorem}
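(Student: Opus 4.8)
The plan is to realise $B$ as the algebra of right $\pi$-coinvariants in $H$ and then invoke the Radford biproduct theorem in the form due to Majid; the $\nat$-grading serves only to supply the split projection of Proposition~\ref{ss:grandsplproj:Ngdsplproj}.

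First I would set $\Pi\colon H\to H$, $\Pi(h)=h_{(1)}\,\iota S_{H_{0}}\pi(h_{(2)})$, and
\[ B\defeq\Pi(H)=\{\,h\in H\mid(\id\tensor\pi)\coproduct(h)=h\tensor 1\,\}. \]
Since $\pi$ and $\iota$ are mutually split morphisms of $\nat$-graded Hopf algebras, $\Pi$ is an $\nat$-graded idempotent linear map, so $B=\bigdsum_{n}B_{n}$ with $B_{n}=\Pi(H_{n})=B\intersection H_{n}$, and in particular $B_{0}=\Pi(H_{0})=k$.  Moreover $B$ is a subalgebra of $H$ containing $1$, and $\coproduct_{H}$, $\counit$ and $S_{H}$ induce on $B$---after correcting by $\Pi$ on one tensor factor of $\coproduct_{H}$---a coproduct, counit and antipode, while $B$ also carries the left coaction $(\pi\tensor\id)\coproduct|_{B}\colon B\to H_{0}\tensor B$ and the left $H_{0}$-action obtained by restricting along $\iota$ the adjoint action of $H$ on itself.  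That these data make $B$ a Hopf algebra in the braided category ${}^{H_{0}}_{H_{0}}\mathcal{YD}$ of Yetter--Drinfel\cprime d modules over $H_{0}$, and that multiplication $B\tensor H_{0}\to H$ is then a bijection, is precisely Radford's biproduct theorem as reformulated by Majid (\cite{Radford},\cite{MajidBraidedMatrix}); all the verifications are identities of Hopf algebra and (co)module structures already in hand.

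Next I would pass from the Yetter--Drinfel\cprime d picture to $D(H_{0},H'_{0})$-modules.  By Proposition~\ref{ss:grandsplproj:Ngdsplproj} applied to $H'$, the degree-zero part $H'_{0}$ is a sub-Hopf algebra of $H'$; the antipode of an $\nat$-graded Hopf algebra is degree-preserving, and it is invertible here by hypothesis, so $S_{H'}$ and $\inv{S_{H'}}$ restrict to $H'_{0}$, which therefore has an invertible antipode.  Restriction of the given pairing is a Hopf algebra pairing $\ip{\ }{\ }\colon H_{0}\tensor H'_{0}\to k$, and with it together with $\inv{S_{H'_{0}}}$ one transports the left $H_{0}$-coaction on $B$ to a left $H'_{0}$-action; combined with the $H_{0}$-action this is exactly the data of a module over the double cross product $D(H_{0},H'_{0})=H_{0}\dbxsum H'_{0}$, the Yetter--Drinfel\cprime d compatibility becoming the compatibility defining such a module (the standard transport of \cite[Ch.~7]{FQGT}).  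It is here---rather than by dualising $H_{0}$, which need not be finite-dimensional---that $H'$ and its invertible antipode are used.  Hence $B$ is a Hopf algebra in the braided category ${}_{D(H_{0},H'_{0})}\mathcal{M}$.

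Finally, $\theta\colon B\rtimesdot H_{0}\to H$, $\theta(b\tensor h)=b\,\iota(h)$, with candidate inverse $h\mapsto\Pi(h_{(1)})\tensor\pi(h_{(2)})$, is the required isomorphism: it is an algebra map because the semidirect relations of Definition~\ref{s:prelims:bosonisation} are matched by the identity $\iota(h)\,b=(h_{(1)}\laction b)\,\iota(h_{(2)})$ holding in $H$; it is a coalgebra map because the induced coaction $\alpha$ of Definition~\ref{s:prelims:bosonisation}, built from the double (equivalently, weak quasitriangular) structure on $H_{0}$, is by construction the coaction already placed on $B$; and it is bijective by the biproduct part of Radford's theorem quoted above.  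Thus $B\rtimesdot H_{0}\iso H$.  The one point genuinely needing care is the passage to $D(H_{0},H'_{0})$-modules: the left/right and antipode conventions must be tracked so that the Yetter--Drinfel\cprime d axiom becomes verbatim the $D(H_{0},H'_{0})$-module axiom, and so that no finiteness of $H$ is invoked---which is exactly why $H'$ and its invertible antipode are carried through the statement.  Everything else is bookkeeping around Radford's theorem, Definition~\ref{s:prelims:bosonisation} and Proposition~\ref{ss:grandsplproj:Ngdsplproj}.
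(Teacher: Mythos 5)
Your proposal is correct and follows exactly the route the paper takes: the paper states this as the Radford--Majid theorem (citing \cite{Radford},\cite{MajidBraidedMatrix}) and then records precisely the structures you construct---the idempotent $\Pi$, $B$ as the coinvariants $\{ b \in H \mid b_{(1)} \tensor \pi(b_{(2)}) = b \tensor 1 \}$, the adjoint $H_{0}$-action together with the $H'_{0}$-action obtained from the pairing (packaged as a $D(H_{0},H'_{0})$-module to avoid dualising the possibly infinite-dimensional $H_{0}$), and the isomorphism $\Upsilon$ with inverse $b \tensor h \mapsto b\cdot\iota(h)$. The only point to watch is that the paper works with the \emph{right} coaction $\beta(h)=h_{(1)}\tensor\pi(h_{(2)})$ and a right $H'_{0}$-action, where you write a left coaction, but this is the convention-tracking you already flag and does not affect the argument.
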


\noindent Here $D$ denotes the Drinfel\cprime d double, with $H_{0}$ and $H'_{0}$ dually paired and $D(H_{0},H'_{0})=H_{0} \dbxsum \op{H'_{0}}$ (the double cross product form again).  Note that the dual pairing of $H$ and $H'$ does descend to a dual pairing of the sub-Hopf algebras $H_{0}$ and $H'_{0}$ and the (invertible) antipode of $H'$ restricts to an invertible antipode on $H'_{0}$.  

We have the following explicit descriptions of $B$ and the isomorphism, from \cite{MajidBraidedMatrix}:

\begin{enumerate}
\item $B \defeq \{ b \in H \mid b_{(1)} \tensor \pi(b_{(2)}) = b \tensor 1 \}$.  $B$ is a subalgebra of $H$, namely the subalgebra of coinvariants of $H$ under the coaction given by $\beta(h)=h_{(1)} \tensor \pi(h_{(2)})$.
\item $B$ may also be described as the image of the map $\Pi\colon H \to H$, $\Pi(h)=h_{(1)}((S\circ \iota \circ \pi)(h_{(2)}))$ for all $h\in H$.  We note that $\Pi|_{B}=\id_{B}$ and $\Pi$ is graded, since $\Pi$ is given by a composition of graded maps.
\item The action of $D(H_{0},H'_{0})$ on $B$ is given as follows. Let $b \in B$.  Then
  \begin{itemize}
  \item for $h \in H_{0}$, $h \laction b = \iota(h_{(1)})b(S\circ \iota)(h_{(2)})$ and \label{ss:grandsplproj:H0action}
  \item for $a \in H'_{0}$, $b \raction a = \ip{\pi(b_{(1)})}{a}b_{(2)}$.
  \end{itemize}
\item The braided structures on $B$ are: for $b,\: c \in B$,
  \begin{itemize}
  \item the braided coproduct $\bcoproduct b = \Pi(b_{(1)}) \tensor b_{(2)}$,
  \item the braided antipode $\bantipode b = ((\iota \circ \pi)(b_{(1)}))Sb_{(2)}$, and
  \item the braiding $\Psi=\Psi_{B,B} \in \Mor{\mathcal{D}}{B \btensor B}{B \btensor B}$, $\Psi(b \tensor c) = (\pi(b_{(1)}) \laction c) \tensor b_{(2)}$.
  \end{itemize}
\item The isomorphism $\Upsilon\colon H \to B \rtimesdot H_{0}$ is given by \begin{align*} & \Upsilon(h) =\Pi(h_{(1)}) \tensor \pi(h_{(2)}) = h_{(1)}((S \circ \iota \circ \pi)(h_{(2)})) \tensor \pi(h_{(3)}) \intertext{for all $h\in H$.  Its inverse is $\inv{\Upsilon}\colon B \rtimesdot H_{0} \to H$,} & \inv{\Upsilon}(b \tensor h) = b\cdot \iota(h) \end{align*} for $b \in B$, $h \in H_{0}$ and $\cdot$ the product in $H$.
\end{enumerate}

Recall that any Hopf algebra $H$ acts on itself by the adjoint action $\Ad_{u}(v)=u_{(1)}vSu_{(2)}$ for $u,\,v \in H$.  Furthermore, if $H$ is graded, $\mathrm{Ad}$ is a graded map.  As we saw above, $H_{0}$ acts on $B$ and indeed the formula in \textit{iii}) above may be written as $h \laction b=\Ad_{\iota(h)}(b)$ for $h\in H_{0}$ since $\iota$ is a Hopf algebra map.  In fact, $B$ is an $\mathrm{Ad}$-submodule of $H$.

Next, we note that $B$ inherits an $\nat$-grading from $H=\bigdsum_{n\in \nat} H_{n}$.  For we may define a map $\Upsilon\colon H \to H \tensor H$ by $\Upsilon(h)=h_{(1)}((S \circ \iota \circ \pi)(h_{(2)})) \tensor \pi(h_{(3)})$, the same formula as in \textit{v}) above.  For $h \in H_{n}$, we have $\Upsilon(h) \in H_{n} \tensor H_{0}$.  Now define $B_{n} = \{ b \in B \mid \Upsilon(b) \in H_{n} \tensor H_{0} \}$.  For all $n \in \nat$, $B_{n} = B \cap H_{n}$ and $B$ is an $\nat$-graded algebra: $B=\bigdsum_{n \in \nat} B_{n}$.  

Therefore we may focus our attention on the structure of the homogeneous components: $B_{n}$ is a $D(H_{0},H'_{0})$-submodule of $B$ and $B_{0} = k$.  We note that this tells us that $B$ satisfies the defining Nichols algebra condition (\hspace{-1pt}\textit{a}\hspace{-0.5pt}) of Definition~\ref{s:prelims:d:Nicholsalg}.  Also it is well-known that $B_{0}=k$ implies that $B_{1} \subseteq \brprim{B}$: one uses the fact that the braided coproduct $\bcoproduct$ is a graded map.  However one does not know in general whether condition (\hspace{-1pt}\textit{b}\hspace{-0.5pt}) holds, \ie whether $B_{1}=\brprim{B}$.

\section{Sub-root data and their associated braided Hopf algebras and triangular decompositions}\label{s:defnofB}

We begin by defining sub-root data, denoted $\mathfrak{J} \subseteq_{\iota} \mathfrak{T}$, an abstraction of the Lie algebra-subalgebra pairs we considered in \cite{LieInduction}.  Our reason for introducing these is that any choice of sub-root datum $\mathfrak{J} \subseteq_{\iota} \mathfrak{T}$ gives rise to an $\nat$-grading of the quantum negative Borel subalgebra $\qeb{T}$.  We analyse the structure of the zeroth homo\-geneous component of this grading, showing that it is a semi-direct product of $\qeb{J}$, the quantum negative Borel subalgebra associated to $\mathfrak{J}$, by a group Hopf algebra.  This allows us to show that $\qea{T}$ may be expressed as a double-bosonisation of a similar semi-direct product $\widetilde{\qea{J}}$ by an $\nat$-graded Hopf algebra $B=B(\mathfrak{T},\mathfrak{J},\iota,q)$ in the braided category of $\widetilde{\qea{J}}$-modules.  

\subsection{Sub-root data}\label{s:defnofB:ss:subrootdata}

We define our principal object of study, a pair of suitably related root data.  

\begin{definition}\label{s:defnofB:subrootdatum}
Let 
\begin{align*} \mathfrak{T} & =(I,{}\Cdot{},Y,X,\ip{\ }{\ },i_{1}\colon I\inj Y,i_{2}\colon I\inj X) \\
               \mathfrak{J} & =(J,{}\Cdotp{},Y',X',\ip{\ }{\ }',i_{1}'\colon J\inj Y', i_{2}'\colon J\inj X')
\end{align*} 
be two root data.  Then we say $\mathfrak{J}$ is a sub-root datum of $\mathfrak{T}$ via $\mathbf{\iota}$ if 
\begin{enumerate}
\setlength{\itemsep}{0.5pt}
\item $\iota\colon J\inj I$ is injective,
\item the restriction of ${}\Cdot{}$ to the subgroup $\integ[\iota(J)] \subseteq \integ[I]$ is $\,{}\Cdotp{}$,
\item there exist injective group homomorphisms $s_{Y}\colon Y' \inj Y$, $s_{X}\colon X' \inj X$, such that $Y/s_{Y}(Y')$ and $X/s_{X}(X')$ are free Abelian,
\item the restriction of $\ip{\ }{\ }$ to the subgroup $s_{Y}(Y') \cross s_{X}(X') \subseteq Y\cross X$ is $\ip{\ }{\ }'$,
\item there exists a subgroup $X''$ of $X$ such that $X=X'\dsum X''$ and $s_{Y}(Y')$ is orthogonal to $X''$, \ie $\ip{s_{Y}(y')}{x''}=0$ for all $y' \in Y'$, $x'' \in X''$ and
\item $s_{Y} \circ i_{1}'=i_{1} \circ \iota$ and $s_{X} \circ i_{2}'=i_{2} \circ \iota$.
\end{enumerate}
We will denote this by $\mathfrak{J} \subseteq_{\iota} \mathfrak{T}$.
\end{definition}

\pagebreak
\begin{jegnote}{s}
\begin{enumerate}
\item The maps $s_{Y}$, $s_{X}$ will be suppressed in what follows: we think of $Y'$ and $X'$ as subgroups of $Y$ and $X$ respectively, identifying $Y'$ (resp.\ $X'$) with its image under $s_{Y}$ (resp.\ $s_{X}$).
\item We note that if $G/G'$ is a free Abelian quotient of an Abelian group $G$, such as posited in \textit{iii}), then $G=G' \dsum G''$ for some subgroup $G''$ of $G$ (see, for example, \cite[Section 4.2]{Robinson}).  Therefore the condition in \textit{v}) is concerned principally with the inner product, rather than the existence of $X''$.  The splitting $X=X'\dsum X''$ will be considered as fixed by the choice of sub-root datum.
\item We must specify the map $\iota$, rather than just a set inclusion $J\subseteq I$.  For example, we distinguish between the two embeddings $\iota_{1}(m)=m$ and $\iota_{2}(m)=l-m+1$ of the subset $J=\{ 1,\ldots , l-1 \}$ in $I=\{ 1,\ldots , l\}$.
\end{enumerate}
\end{jegnote}

\noindent A sub-root datum gives rise to an algebra-subalgebra pair of quantized enveloping algebras, in the obvious way.

\begin{lemma}
Let $\mathfrak{J} \subseteq_{\iota} \mathfrak{T}$ be a sub-root datum.  There is an injective Hopf algebra homomorphism $\iota\colon \qea{J} \to \qea{T}$, defined on the generators of $\qea{J}$ by $\iota(E_{j})=E_{\iota(j)}$, $\iota(F_{j})=F_{\iota(j)}$ and $\iota(K_{i'_{1}(j)})=K_{i_{1}(\iota(j))}$ for all $j\in J$. \qed
\end{lemma}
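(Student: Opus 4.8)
The plan is to extend the given assignment to an algebra map, check compatibility with the Hopf structure, and then prove injectivity by reducing along the triangular decomposition. First I would extend $\iota$ to $U_{q}^{0}(\Lie{J})=k[Z']$, where $Z'=\integ[i_{1}'(J)]$, by $\iota(K_{\mu'})=K_{s_{Y}(\mu')}$; this is well defined (and the only possibility) because $s_{Y}$ is a group homomorphism carrying $Z'$ into $Z=\integ[i_{1}(I)]$ by condition \textit{vi}) of Definition~\ref{s:defnofB:subrootdatum}. Conditions \textit{ii}), \textit{iv}) and \textit{vi}) supply precisely the numerical identities needed to push the relations through: $j\Cdotp k=\iota(j)\Cdot\iota(k)$ (hence $c_{j}'=c_{\iota(j)}$, $q_{j}'=q_{\iota(j)}$ and $C_{jk}'=C_{\iota(j)\iota(k)}$) and $\ip{\mu'}{i_{2}'(k)}'=\ip{s_{Y}(\mu')}{i_{2}(\iota(k))}$. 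With these in hand, a direct check shows each of \textbf{(R1)}--\textbf{(R6)} for $\qea{J}$ is carried to the corresponding relation of $\qea{T}$; for \textbf{(R4)} one also uses that $\iota$ is injective on $I$ (so $\delta_{\iota(j)\iota(k)}=\delta_{jk}$) and that $\iota(H_{j})=\iota(K_{i_{1}'(j)})^{c_{j}'}=K_{i_{1}(\iota(j))}^{c_{\iota(j)}}=H_{\iota(j)}$. So $\iota$ is a well-defined algebra homomorphism.

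For the Hopf structure, it is enough to verify $\coproduct\circ\iota=(\iota\tensor\iota)\circ\coproduct$, $\counit\circ\iota=\counit$ and $S\circ\iota=\iota\circ S$ on the generators, since each side is an algebra (respectively anti-algebra) map out of $\qea{J}$. This is immediate from the explicit formulae once one knows $\iota(H_{j})=H_{\iota(j)}$: e.g.\ $\coproduct(\iota E_{j})=E_{\iota(j)}\tensor 1+H_{\iota(j)}\tensor E_{\iota(j)}=(\iota\tensor\iota)\coproduct E_{j}$, with the $F_{j}$ and $K_{\mu'}$ cases identical. Hence $\iota$ is a morphism of Hopf algebras.

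Injectivity is the substantive part. I would use the triangular decompositions $\qea{T}\iso\qeaminus{T}\tensor U_{q}^{0}(\Lie{T})\tensor\qeaplus{T}$ and $\qea{J}\iso\qeaminus{J}\tensor U_{q}^{0}(\Lie{J})\tensor\qeaplus{J}$ as $k$-spaces (via multiplication; \cf \cite[Chapter~4]{Jantzen}, \cite{LusztigBook}). Since $\iota$ is an algebra map carrying $\qeaminus{J}$, $U_{q}^{0}(\Lie{J})$, $\qeaplus{J}$ into $\qeaminus{T}$, $U_{q}^{0}(\Lie{T})$, $\qeaplus{T}$ respectively, these isomorphisms intertwine $\iota$ with $\iota^{-}\tensor\iota^{0}\tensor\iota^{+}$, so it suffices that $\iota^{-}$, $\iota^{0}$, $\iota^{+}$ are each injective. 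The Cartan piece $\iota^{0}=k[\,s_{Y}|_{Z'}\,]$ is injective because $s_{Y}|_{Z'}$ is an injective homomorphism of abelian groups and $k[-]$ preserves injections. For $\iota^{+}$, identify $\qeaplus{T}$ with Lusztig's algebra $\mathbf{f}(I)$ (built from $(I,{}\Cdot{})$) and $\qeaplus{J}$ with $\mathbf{f}(J)$, so that $\iota^{+}$ is induced by the map $\theta_{j}\mapsto\theta_{\iota(j)}$ of free algebras ${}'\mathbf{f}(J)\to{}'\mathbf{f}(I)$. This map is homogeneous for the $\nat[I]$-gradings along the injection $\nat[J]\inj\nat[I]$ extending $\iota$, and for each $\nu\in\nat[J]$ the degree-$\iota(\nu)$ component of ${}'\mathbf{f}(I)$ coincides with the image of the degree-$\nu$ component of ${}'\mathbf{f}(J)$, since injectivity of $\iota$ forces any monomial of degree $\iota(\nu)$ to involve only the letters $\theta_{\iota(j)}$. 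Because $q_{j}'=q_{\iota(j)}$ and $C_{jk}'=C_{\iota(j)\iota(k)}$, the map $\theta_{j}\mapsto\theta_{\iota(j)}$ also preserves Lusztig's canonical bilinear form; combining this with the previous observation and the graded orthogonality of that form, a homogeneous $x\in{}'\mathbf{f}(J)$ lies in the radical if and only if $\iota(x)$ lies in the radical of the form on ${}'\mathbf{f}(I)$. As $\mathbf{f}$ is ${}'\mathbf{f}$ modulo this radical, $\iota^{+}$ is injective; the identical argument with $\qeaminus{T}\iso\mathbf{f}(I)$, $\qeaminus{J}\iso\mathbf{f}(J)$ (sending $F_{i}\mapsto\theta_{i}$) gives injectivity of $\iota^{-}$. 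Therefore $\iota$ is injective.

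The only step that is not a mechanical relation-check is the injectivity of $\iota^{+}$ (equivalently $\iota^{-}$): it genuinely uses the structure theory of the positive part --- its realisation as Lusztig's $\mathbf{f}$, the non-degeneracy of the canonical form (valid over a field of arbitrary characteristic, \cf \cite{Jantzen}), and the grading bookkeeping that matches a sub-Cartan datum with a subset of the Chevalley generators. Everything else is forced by conditions \textit{ii}), \textit{iv}) and \textit{vi}) in the definition of a sub-root datum.
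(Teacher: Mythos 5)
Your proof is correct, and it supplies considerably more than the paper does: the lemma is stated there without proof, accompanied only by the remark that conditions \textit{iv}\textup{)} and \textit{vi}\textup{)} of the definition of a sub-root datum ensure the relations are respected. That remark is precisely your well-definedness check, which you carry out in full (conditions \textit{ii}\textup{)}, \textit{iv}\textup{)}, \textit{vi}\textup{)} giving $c_{j}'=c_{\iota(j)}$, $q_{j}'=q_{\iota(j)}$, $C_{jk}'=C_{\iota(j)\iota(k)}$ and $\ip{\mu'}{i_{2}'(k)}'=\ip{s_{Y}(\mu')}{i_{2}(\iota(k))}$, which is all that \textbf{(R1)}--\textbf{(R6)} and the coalgebra formulae require). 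Where you genuinely go beyond the paper is injectivity, which the paper asserts without argument. Your route --- reduce along the triangular decompositions, which intertwine $\iota$ with $\iota^{-}\tensor\iota^{0}\tensor\iota^{+}$ because the decomposition isomorphisms are given by multiplication and $\iota$ is an algebra map preserving the three pieces; dispose of the Cartan factor via injectivity of $k[\,s_{Y}|_{Z'}\,]$; and handle $U_{q}^{\pm}$ by realising them as Lusztig's $\mathbf{f}$ and exploiting the graded orthogonality and compatibility of the canonical form under $\theta_{j}\mapsto\theta_{\iota(j)}$ --- is the standard argument and is sound. The one hypothesis it leans on implicitly is the paper's standing assumption that $q$ is not a root of unity, which is what guarantees that the Serre presentation of $U_{q}^{\pm}$ coincides with ${}'\mathbf{f}$ modulo the radical of the form (the non-degeneracy statement the paper attributes to Jantzen, valid in arbitrary characteristic); it would be worth flagging that dependence explicitly, since at a root of unity the identification, and hence this injectivity argument, would break down.
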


The conditions \textit{iv}) and \textit{vi}) of the definition of a sub-root datum ensure that the relations are respected.  As an example, take the sub-root datum $A_{2} \subseteq_{\iota} A_{3}$ with $\iota\colon \{1,2\} \to \{1,2,3\}$, $\iota(j)=j$.

We may build up root data by taking direct sums.

\begin{definition}\label{s:defnofB:dsumofrd}
Let 
\begin{align*} \mathfrak{T} & =(I,{}\Cdot{},Y,X,\ip{\ }{\ },i_{1}\colon I\inj Y,i_{2}\colon I\inj X) \\
               \mathfrak{J} & =(J,{}\Cdotp{},Y',X',\ip{\ }{\ }',i_{1}'\colon J\inj Y', i_{2}'\colon J\inj X')
\end{align*} 
be two root data.  Then the direct sum\/ $\mathfrak{T} \dsum \mathfrak{J}$ of\/ $\mathfrak{T}$ and\/ $\mathfrak{J}$ is the root datum with underlying set $I \union J$, symmetric bilinear form\/ $\cdot_{\dsum}=\cdot \dsum \cdot'$, associated finitely generated free Abelian groups\/ $Y\dsum Y'$ and $X\dsum X'$, non-degenerate bilinear form $\ip{\ }{\ }_{\dsum}\colon (Y\dsum Y') \cross (X\dsum X') \to \integ$ defined by $\ip{y_{1}\dsum y_{2}}{x_{1}\dsum x_{2}}_{\dsum}=\ip{y_{1}}{x_{1}}+\ip{y_{2}}{x_{2}}'$ and associated inclusions $i_{r}\dsum i_{r}'\colon I \union J \to Y\dsum Y'$, $r=1,2$, with $(i_{1}\dsum i_{1}') |_{I}=i_{1}$, etc.
\end{definition}

\noindent It is clear that this is again a root datum.  The notions of sub-root datum and direct sum are suitably compatible: $\mathfrak{T}$, $\mathfrak{J}$ are sub-root data of $\mathfrak{T} \dsum \mathfrak{J}$ via the inclusions $I, J \subseteq I \union J$.

Let $\mathfrak{J} \subseteq_{\iota} \mathfrak{T}$ be a sub-root datum of $\mathfrak{T}$ via $\iota$.  

\begin{definition}\label{s:defnofB:rhodefn} We have a splitting $X = X' \dsum X''$ so let $\pi\colon X \to X/X''$ be the canonical projection and $i\colon X/X'' \to X'$ the isomorphism of $X/X''$ with $X'$.  Define the restriction map $\rho \colon X \to X'$ to be $\rho = i \circ \pi$.  In particular, we have $\rho |_{X'} = \id_{X'}$.  If $\lambda \in X$, we will often denote $\rho(\lambda) \in X'$ by $\lambda'$.  This is consistent with the decomposition $\lambda = \lambda' \dsum \lambda''$, $\lambda' \in X'$, $\lambda'' \in X''$ given by $X = X' \dsum X''$.
\end{definition}

Note that for all $\mu' \in Y'$, we have $\ip{ \mu'}{\rho(\lambda)}'=\ip{\mu'}{\lambda}$.  We call $\rho$ the restriction map as it encodes the restriction of weight representations from $\qea{T}$ to $\qea{J}$.  For let $M = \bigdsum_{\lambda \in X} M^{\lambda}_{\mathfrak{T}}$ be a weight module for $\qea{T}$.  Then $M$ is a weight module for $\qea{J}$ by restriction, so we may write $M = \bigdsum_{\lambda' \in X'} M^{\lambda'}_{\mathfrak{J}}$.  Furthermore, \[ M^{\lambda'}_{\mathfrak{J}} = \bigdsum_{\substack{\lambda \in X,\\ \rho(\lambda)=\lambda'}} M^{\lambda}_{\mathfrak{T}} = \bigdsum_{\lambda'' \in X''} M^{\lambda' + \lambda''}_{\mathfrak{T}}. \]

\noindent It is then natural to ask if $\rho$ preserves dominance (Definition~\ref{s:prelims:Sdominant}).  We say a root datum $\mathfrak{T}$ with associated embedding $i_{1}\colon I \inj Y$ is $Y$-regular if the set $\Image{i_{1}}$ is linearly independent in $Y$.  (We may define $X$-regularity in a similar fashion.  Lusztig (\cite[\S 6.3.3]{LusztigBook}) notes that if $\mathfrak{T}$ is a finite type root datum then $\mathfrak{T}$ is automatically both $X$- and $Y$-regular.)  Clearly, if $\mathfrak{T}$ is $Y$-regular then any sub-root datum of $\mathfrak{T}$ is too.  Furthermore, dominance is preserved under $\rho$.

\subsection{The quantum negative Borel subalgebra $\qeb{T}$}\label{s:defnofB:ss:HAqebI}

Let $\mathfrak{J} \subseteq_{\iota} \mathfrak{T}$.  Then $\qea{J}$ may be identified with the sub-Hopf algebra of $\qea{T}$ with generators $E_{j}$, $F_{j}$, $j\in \iota(J)$, $K_{\nu}$, $\nu \in Z' \defeq \integ[i'_{1}(J)]$.  

Recall that $\qea{T}$ has a $\integ[I]$-grading, given by $\deg E_{i} = -\deg F_{i} = i$, $\deg K_{\mu} = 0$.  However, it also has many $\integ$-gradings.  Let $\gamma\colon I\to \integ$ be any function.  Then $\qea{T}$ is $\integ$-graded by $\deg E_{i} = -\deg F_{i} = \gamma(i)$, $\deg K_{\mu} = 0$.  We see this by noting that all the defining relations are homogeneous in degree (\cf \cite[Section~1.5]{Kac}).  In particular, $\qea{T}$ has a $\integ$-grading associated to any sub-root datum $\mathfrak{J} \subseteq_{\iota} \mathfrak{T}$.  Let $D=I\setminus \iota(J)$ and let $\chi_{D}\colon I \to \{ 0,1 \}$ be the indicator function for $D$, \ie 
\[ \chi_{D}(i)=\begin{cases} 1 & \text{if}\ i\in D \\
                             0 & \text{if}\ i\not\in D
               \end{cases} \]
Then, as above, regarding $\chi_{D}$ as a function $I \to \integ$ we have a $\integ$-grading on $\qea{T}$:  
\[ \qea{T}=\bigdsum_{n\in \integ} \qea{T}_{[n]}. \]
In particular, $\qea{J} \subseteq \qea{T}_{[0]}$ and $U_{q}^{0}(\Lie{T})=\lgen K_{\mu} \mid \mu \in Z \rgen \subseteq \qea{T}_{[0]}$.

Consider now the sub-Hopf algebra $\qeb{T}$ of $\qea{T}$, the analogue of the negative Borel subalgebra, generated by the set $\{ F_{i} \mid i\in I \} \union \{ K_{\mu} \mid \mu \in Z \}$.  Then $\qeb{T}$ is $\nat$-graded via $\chi_{D}$: $\deg F_{i} = \chi_{D}(i)$, $\deg K_{\mu} = 0$.  In particular, $\qebO{T}$ contains $\qeb{J}$, which is generated by $\{ F_{j} \mid j\in \iota(J) \} \union \{ K_{\nu} \mid \nu \in Z' \}$.  Note, though, that $\qeb{T} \neq \bigdsum_{i\leq 0} \qea{T}_{[i]}$ since for example for any $i\in I$, $E_{i}F_{i} \in \qea{T}_{[0]}$ but $E_{i}F_{i} \notin \qeb{T}$.  Also, as we recalled in Subsection~\ref{s:prelims:ss:rootdata+QEAs}, $\qeb{T}$ is self-dually paired.  Indeed $(\qeb{T},\qeb{T})$ is a dual pair of $\nat$-graded Hopf algebras.  Hence, Proposition~\ref{ss:grandsplproj:Ngdsplproj} and Theorem~\ref{ss:grandsplproj:RadfordMajid} apply to $\qeb{T}$ and we have the following.

\begin{theorem}\label{s:defnofB:Bexists}
Let $\mathfrak{J} \subseteq_{\iota} \mathfrak{T}$ be a sub-root datum of $\mathfrak{T}$ and let $\qeb{T}=\dsum_{n\in \nat} \qeb{T}_{[n]}$ be the associated $\nat$-graded sub-Hopf algebra of $\qea{T}$.  Then there exists a Hopf algebra $B=B(\mathfrak{T},\mathfrak{J},\iota,q)$ in the braided category of $D(\qebO{T})$-modules such that $\qeb{T} \iso B \rtimesdot \qebO{T}$. \qed   
\end{theorem}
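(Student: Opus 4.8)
The plan is to obtain the theorem as a direct application of the graded Radford--Majid theorem (Theorem~\ref{ss:grandsplproj:RadfordMajid}) to the self-dual pair $(H,H')=(\qeb{T},\qeb{T})$, where the pairing is the self-duality pairing $\rip{\ }{\ }$ recalled in Subsection~\ref{s:prelims:ss:rootdata+QEAs} and both copies carry the $\nat$-grading induced by $\chi_{D}$, so that $\deg F_{i}=\chi_{D}(i)$ and $\deg K_{\mu}=0$. The output of that theorem is exactly the data we want: a Hopf algebra $B$ in the braided category of $D(H_{0},H'_{0})$-modules and an isomorphism $B\rtimesdot H_{0}\iso H$, where we name $H_{0}=H'_{0}=\qebO{T}$ and set $B(\mathfrak{T},\mathfrak{J},\iota,q):=B$.

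First I would assemble the hypotheses of Theorem~\ref{ss:grandsplproj:RadfordMajid}. That $\qeb{T}$ is $\nat$-graded via $\chi_{D}$, with zeroth component $\qebO{T}$, has already been established in this subsection (all defining relations are homogeneous for $\chi_{D}$). That $\rip{\ }{\ }$ is a dual pairing of Hopf algebras was recalled above, going back to Drinfel\cprime d. Finally, the antipode $S$ of $\qeb{T}$, inherited from $\qea{T}$ so that $SF_{i}=-F_{i}H_{i}$ and $SK_{\mu}=\inv{K_{\mu}}$, is bijective: on $\qeb{T}$ the square $S^{2}$ rescales each $F_{i}$ by a nonzero scalar and fixes $U_{q}^{0}(\mathfrak{T})$, hence is an invertible algebra automorphism, and therefore $S$ itself is invertible on $\qeb{T}$ (and the invertible antipode descends to $\qebO{T}$). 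With these in place, Proposition~\ref{ss:grandsplproj:Ngdsplproj} supplies the split graded Hopf-algebra projection $\pi\colon\qeb{T}\onto\qebO{T}$, split by the inclusion of $\qebO{T}$, and Theorem~\ref{ss:grandsplproj:RadfordMajid} applied with $H=H'=\qeb{T}$ and $H_{0}=H'_{0}=\qebO{T}$ then delivers $B$ together with the asserted isomorphism $B\rtimesdot\qebO{T}\iso\qeb{T}$.

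The one point that requires a small argument is the compatibility of the self-pairing with the grading: I would check that $\rip{\ }{\ }$ is homogeneous for $\chi_{D}$, \ie $\rip{\qeb{T}_{[m]}}{\qeb{T}_{[n]}}=0$ for $m\neq n$, so that $(\qeb{T},\qeb{T})$ really is a dual pair of $\nat$-graded Hopf algebras in the sense needed. This follows by reducing the pairing of two monomials to products of pairings of generators via $\rip{ab}{c}=\rip{a}{c_{(1)}}\rip{b}{c_{(2)}}$ together with the fact that the coproduct of $\qeb{T}$ is a graded map; the only non-vanishing generator pairings, $\rip{F_{i}}{F_{j}}$ (with $i=j$) and $\rip{K_{\mu}}{K_{\nu}}$, are then between homogeneous elements of equal degree. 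This in particular secures, in the present situation, the remark following Theorem~\ref{ss:grandsplproj:RadfordMajid}: the pairing restricts to a Hopf pairing of $\qebO{T}$ with itself, so the Drinfel\cprime d double $D(\qebO{T})=D(\qebO{T},\qebO{T})$ and the braided category appearing in the statement are well-defined.

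As for obstacles, there is no deep one: the content has been front-loaded into Proposition~\ref{ss:grandsplproj:Ngdsplproj}, Theorem~\ref{ss:grandsplproj:RadfordMajid}, and the earlier discussion of the self-pairing of $\qeb{T}$, so the proof is essentially a matter of verifying that the self-paired graded Hopf algebra $\qeb{T}$ meets those hypotheses, the homogeneity of the pairing above being the only genuinely new (and routine) verification. The remaining care needed is purely bookkeeping, namely recording that $B$ depends on the auxiliary data only through $D=I\setminus\iota(J)$ (equivalently, through the grading $\chi_{D}$) and through the ambient root datum $\mathfrak{T}$ defining $\qea{T}$, which justifies the notation $B(\mathfrak{T},\mathfrak{J},\iota,q)$.
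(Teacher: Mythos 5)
Your proposal is correct and follows exactly the paper's route: the theorem is obtained by applying Proposition~\ref{ss:grandsplproj:Ngdsplproj} and Theorem~\ref{ss:grandsplproj:RadfordMajid} to the self-dual $\nat$-graded pair $(\qeb{T},\qeb{T})$, which is all the paper itself does (hence the \qed{} in the statement). Your additional verifications---invertibility of the antipode via $S^{2}F_{i}=q^{i\Cdot i}F_{i}$ and homogeneity of the pairing with respect to the $\chi_{D}$-grading---are exactly the routine checks the paper leaves implicit in the sentence ``$(\qeb{T},\qeb{T})$ is a dual pair of $\nat$-graded Hopf algebras.''
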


\noindent Here we have $D(\qebO{T})=\qebO{T} \dbxsum \qebplus{T}_{[0]}$.  We now examine in more detail the structure of $\qebO{T}$.  We see immediately that the zeroth graded component $\qebO{T}$ of $\qeb{T}$ is generated by the set $\{ F_{j} \mid j\in \iota(J) \} \union \{ K_{\mu} \mid \mu \in Z \}$.  As noted above, $\qeb{J} \subseteq \qebO{T}$ and indeed is a sub-Hopf algebra.  We show that $\qebO{T}$ is a semi-direct (or smash) product of $\qeb{J}$ by $k[Z/Z']$.  

Note that $\integ[i'_{1}(J)] = Z' \subseteq Z=\integ[i_{1}(I)]$ and the quotient $Z/Z'$ is free Abelian---the quotient may be identified with $\integ[i_{1}(D)]$ where $D=I\setminus \iota(J)$.  Then since $Z/Z'$ is a free Abelian quotient of a free Abelian group $Z$, we have $Z=Z' \dsum Z''$ for some subgroup $Z''$ of $Z$ ($Z''$ is isomorphic to $\integ[i_{1}(D)]$).

\begin{proposition}\label{s:defnofB:extn}
$\qebO{T}\iso \qeb{J} \rtimes k[Z/Z']$ as Hopf algebras, where $k[Z/Z']$ is the group Hopf algebra of $Z/Z'$.
\end{proposition}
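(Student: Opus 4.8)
The plan is to produce an explicit isomorphism whose underlying map is the restriction of the multiplication of $\qebO{T}$. First I would use the fixed splitting $Z = Z' \dsum Z''$ to identify $k[Z/Z']$ with the sub-Hopf algebra $k[Z'']$ of $U_{q}^{0}(\mathfrak{T}) \subseteq \qebO{T}$, sending the class $\bar{\mu}$ to $K_{\mu''}$, where $\mu = \mu' + \mu''$ with $\mu' \in Z'$, $\mu'' \in Z''$. Each $K_{\mu''}$ is grouplike in $\qebO{T}$, so conjugation by it is a Hopf algebra automorphism of $\qebO{T}$; relation \textbf{(R3)} shows this automorphism preserves the sub-Hopf algebra $\qeb{J}$, acting by $F_{j} \mapsto q^{-\ip{\mu''}{i_{2}(j)}}F_{j}$ and fixing the $K_{\nu}$, $\nu\in Z'$. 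Transporting this action along the group isomorphism $Z/Z' \iso Z''$ makes $\qeb{J}$ a Hopf algebra in the category of $k[Z/Z']$-modules, so the smash product $\qeb{J} \rtimes k[Z/Z']$ is defined, with the tensor-product coalgebra structure, and is a Hopf algebra by the standard smash-product construction (a special case of bosonisation).

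Next I would define $\Phi \colon \qeb{J} \rtimes k[Z/Z'] \to \qebO{T}$ by $\Phi(x \tensor \bar{\mu}) = x K_{\mu''}$, i.e. the map $(\iota \tensor \text{inclusion})$ followed by multiplication in $\qebO{T}$. Using that $\bar{\mu}$ acts on $\qeb{J}$ as conjugation by $K_{\mu''}$ and that $(\mu + \nu)'' = \mu'' + \nu''$, one checks directly that $\Phi$ is an algebra map; since $\bar{\mu}$ is grouplike in the smash product and $K_{\mu''}$ is grouplike in $\qebO{T}$, it is also a coalgebra map, hence a morphism of Hopf algebras. It is surjective because its image contains $F_{j}$ $(j\in\iota(J))$, $K_{\nu}$ $(\nu\in Z')$ and $K_{\mu''}$ $(\mu''\in Z'')$, and these generate $\qebO{T}$.

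The one point needing real input is injectivity, and for this I would appeal to the triangular decomposition of the quantum Borel subalgebra: multiplication gives vector-space identifications $\qeb{T} \iso \qeaminus{T} \tensor k[Z]$ and $\qeb{J} \iso \qeaminus{J} \tensor k[Z']$, so that $\qeb{T} = \bigdsum_{\mu\in Z} \qeaminus{T} K_{\mu}$ (a free right $k[Z]$-module on $\qeaminus{T}$) and $\qeb{J} = \bigdsum_{\nu\in Z'} \qeaminus{J} K_{\nu}$, and $\iota\colon \qea{J}\inj\qea{T}$ is compatible with these, carrying $\qeaminus{J}$ into $\qeaminus{T}$. Given an element $\sum_{i} x_{i}\tensor\bar{\mu}_{i}$ of $\Ker{\Phi}$ with the $\bar{\mu}_{i}$ pairwise distinct, write $x_{i} = \sum_{\nu\in Z'} u_{i\nu}K_{\nu}$ with $u_{i\nu}\in\qeaminus{J}$; then $0 = \Phi\bigl(\sum_{i} x_{i}\tensor\bar{\mu}_{i}\bigr) = \sum_{i,\nu} u_{i\nu}K_{\nu+\mu_{i}''}$, and since the exponents $\nu + \mu_{i}'' \in Z'\dsum Z''$ are pairwise distinct while each $u_{i\nu}$ lies in $\qeaminus{T}$, freeness over $k[Z]$ forces every $u_{i\nu}=0$, whence every $x_{i}=0$. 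Thus $\Phi$ is an isomorphism of Hopf algebras. The \emph{main obstacle} is therefore not conceptual: it lies in having the PBW-type triangular decompositions of $\qeb{T}$ and $\qeb{J}$ available in the present generality (arbitrary root data, not necessarily of finite type) and in verifying that the inclusion $\iota$ respects them; granted that, the remainder is routine bookkeeping.
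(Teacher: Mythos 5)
Your proposal is correct and follows essentially the same route as the paper: both use the splitting $Z = Z' \dsum Z''$ to let $k[Z/Z']$ act on $\qeb{J}$ by the conjugation scalars coming from \textbf{(R3)} and then identify the resulting smash product with $\qebO{T}$ via the multiplication map. The paper simply asserts that this identification is ``straightforward'' from the definition of the action; your injectivity argument via the triangular decompositions $\qeb{T}\iso \qeaminus{T}\tensor k[Z]$ and $\qeb{J}\iso \qeaminus{J}\tensor k[Z']$ is exactly the standard fact (valid for arbitrary root data) that underwrites that assertion.
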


\begin{proof} The splitting $Z=Z' \dsum Z''$ yields a unique decomposition of elements of $Z$ into elements of $Z'$ and $Z''$: for $\mu \in Z$, we have $\mu = \mu' \dsum \mu''$ for (unique) $\mu' \in Z'$, $\mu'' \in Z''$.  Therefore to each $\mu \in Z$ we have a unique associated pair $(\mu',\nu)$ with $\mu' \in Z'$, $\nu = \hat{\pi}(\mu'') \in Z/Z'$.  Define $p_{1}(\mu)=\mu'$, $p_{2}(\mu)=\nu$ for all $\mu \in Z$.  We also set $q_{1}$ and $q_{2}$ to be the natural inclusions of the subgroups $Z'$ and $Z''$ into $Z\dsum Z''$, respectively, and denote by $r:Z/Z' \to Z''$ the natural isomorphism.

The algebra $\qeb{J}$ is generated by $\{ F_{j},K_{\alpha} \mid j\in \iota(J), \alpha \in Z' \}$; let $k[Z/Z']$ have generating set $\{ L_{\beta} \mid  \beta \in Z/Z' \}$.  Then we may form a semi-direct product of algebras of $\qeb{J}$ by $k[Z/Z']$ by the action $L_{\beta} \laction K_{\alpha}=K_{\alpha}$ and $L_{\beta} \laction F_{j}=q^{\ip{(q_{2} \circ r)(\beta)}{i_{2}(j)}}F_{j}$, extended linearly and to products.  (That is, $k[Z/Z']$ acts trivially on the subalgebra generated by the $K_{\alpha}$ and by the same scalar as in \textbf{(R3)} on generators of $\qeaminus{J}$, as $K_{(q_{2} \circ r)(\beta)}$ does in the adjoint action.)

It is straightforward to see that $\qeb{J} \rtimes k[Z/Z']$ is isomorphic to $\qebO{T}$ as algebras---the definition of the action yields the correct relations---and indeed as Hopf algebras, taking the former with the tensor product coalgebra structure and antipode, again from the definitions of these structures on $\qea{I}$. 
\end{proof}

\subsection{$\qea{T}$ is a double-bosonisation}\label{s:defnofB:ss:uqIisaDB}

Recall from Theorem~\ref{s:defnofB:Bexists} that we constructed $B=B(\mathfrak{T},\mathfrak{J},\iota,q)$ in the (braided) category of $D(\qebO{T})$-modules.  However, to reconstruct $\qea{T}$ as a double-bosonisation, we require $B$ in the category of modules for the algebra that is built from the quantized enveloping algebra $\qea{J}$ associated to the sub-system and the group Hopf algebra $k[Z/Z']$, not just the ``half'' $\qeb{J}\rtimes k[Z/Z']$.  To see that this is indeed the case, we make use of our analysis of the structure of $\qebO{T}$ and define a projection from the double $D(\qebO{T})$ to this algebra whose kernel annihilates $B$.

Let $\widetilde{\qea{J}}\defeq \qea{J} \rtimes k[Z/Z']$ be the semi-direct product of algebras given by extending the action of $k[Z/Z']$ on generators of $\qeb{J}$ described previously to those of $\qea{J}$ by additionally setting $L_{\beta} \laction E_{j}=q^{-\ip{(q_{2}\circ r)(\beta)}{i_{2}(j)}}E_{j}$ for $j\in \iota(J)$.  We give $\widetilde{\qea{J}}$ the tensor product Hopf algebra structure, as before.

\begin{lemma}\label{s:defnofB:UqJextquotofD}
The Hopf algebra $\widetilde{\qea{J}}$ is a quotient Hopf algebra of $D(\qebO{T})$ and the kernel of the corresponding natural projection annihilates $B$.
\end{lemma}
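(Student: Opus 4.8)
We want to show that $\widetilde{\qea{J}}=\qea{J}\rtimes k[Z/Z']$ is a quotient Hopf algebra of $D(\qebO{T})=\qebO{T}\dbxsum\qebplus{T}_{[0]}$, with the projection's kernel annihilating $B$. The strategy is to exploit Proposition~\ref{s:defnofB:extn}, which identifies $\qebO{T}\iso\qeb{J}\rtimes k[Z/Z']$, and its obvious dual version identifying $\qebplus{T}_{[0]}\iso\qebplus{J}\rtimes k[Z/Z']$. First I would observe that the double $D(\qebO{T})$ is generated by $F_j\tensor 1$, $1\tensor E_j$ ($j\in\iota(J)$), together with two copies of the Cartan part $K_\mu\tensor 1$ and $1\tensor K_\mu$ ($\mu\in Z$); these are Drinfel\cprime d-doubled, so the two Cartan tori are a priori independent. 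I would then define a Hopf algebra surjection $\mathbb{Q}\colon D(\qebO{T})\onto\widetilde{\qea{J}}$ by sending $F_j\tensor 1\mapsto F_j$, $1\tensor E_j\mapsto E_j$, and identifying the Cartan parts in the way dictated by Drinfel\cprime d's projection (Section~\ref{s:prelims:ss:rootdata+QEAs}): namely $K_\nu\tensor 1=1\tensor K_\nu$ for $\nu\in Z'$, while the ``extra'' Cartan generators indexed by $Z''$ (equivalently $Z/Z'$) are sent to the group-like generators $L_\beta$ of $k[Z/Z']$ (using the isomorphism $r\colon Z/Z'\to Z''$), with the second copy's $Z''$-part being forced to agree by the relation. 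One checks this respects the relations of $D(\qebO{T})$ exactly because the action of $k[Z/Z']$ in $\widetilde{\qea{J}}$ was defined (in Proposition~\ref{s:defnofB:extn} and just before this lemma) to replicate the scalars from \textbf{(R2)}, \textbf{(R3)}—that is, $L_\beta$ acts on $E_j, F_j$ exactly as $K_{(q_2\circ r)(\beta)}$ does via the adjoint/coregular action.

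**Key steps, in order.** (1) Write down generators and defining relations of $D(\qebO{T})$ explicitly using $\qebO{T}\iso\qeb{J}\rtimes k[Z/Z']$ and the dual pairing recalled in Subsection~\ref{s:prelims:ss:rootdata+QEAs}; isolate the sub-pairing of the $k[Z/Z']$-factor with itself and the cross-pairings, so that the double-cross-product structure becomes transparent. (2) Define $\mathbb{Q}$ on generators as above and verify it is a well-defined Hopf algebra map by checking it respects each relation; the relations splitting along $Z'$ reduce to Drinfel\cprime d's original projection $\mathbb{P}\colon D(\qeb{J})\onto\qea{J}$, and the relations involving the $Z''$-parts are exactly the semi-direct-product relations defining $\widetilde{\qea{J}}$. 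Surjectivity is immediate since all generators of $\widetilde{\qea{J}}$ are hit. (3) Identify $\ker\mathbb{Q}$: it is generated by the elements $K_\nu\tensor\inv{K_\nu}-1\tensor 1$ for $\nu\in Z'$ (the $\qea{J}$-part of Drinfel\cprime d's relation) together with elements $(K_\mu\tensor 1)-$ (the corresponding $1\tensor K_{\mu'}$ times $L$-type element) expressing that the two $Z''$-Cartan copies both map to the same $k[Z/Z']$ generator. (4) Finally, show $\ker\mathbb{Q}$ annihilates $B$. Here I would use the explicit description of $B$ from Theorem~\ref{ss:grandsplproj:RadfordMajid}/Subsection~\ref{ss:grandsplproj}: $B$ sits inside $\qeb{T}$ as coinvariants, and the $D(\qebO{T})$-action is the one given by item~\textit{iii}) of that list—$K_\nu\in Z'$ acts by the adjoint action of $K_\nu\in\qeb{J}\subseteq\qebO{T}$, and the ``$E$-side'' acts via the coregular action $\raction$ against the pairing. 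Since $B\subseteq\qeb{T}$ which lives inside $\qea{T}=\mathbb{P}(D(\qeb{T}))$, the relations $K_\nu\tensor\inv{K_\nu}=1$ already hold on $\qeb{T}$, so act as zero on $B$; and the statement that the two $Z''$-Cartan copies act identically on $B$ follows because on $\qeb{T}$ the single torus $k[Z]$ acts, and the doubled $Z''$-generators both descend to that same action.

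**Main obstacle.** The delicate point is step~(4), making precise that the kernel annihilates $B$ as a $D(\qebO{T})$-\emph{module}, not merely that the corresponding relation holds in the quotient algebra $\qea{T}$. One must track carefully how the two independent Cartan tori of the double $D(\qebO{T})$ act on $B$ via formula~\textit{iii}) of the Radford--Majid data: the left factor acts by the adjoint action through $\iota\colon\qebO{T}\inj\qeb{T}$, while the right factor $\qebplus{T}_{[0]}$ acts by the coregular action $b\raction a=\ip{\pi(b_{(1)})}{a}b_{(2)}$, and one needs the self-pairing of $\qeb{T}$ to guarantee these two actions of the $Z''$-part coincide on coinvariants. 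I would isolate this as a short computation: evaluate both actions of a generator $K_{\mu''}$ ($\mu''\in Z''$) on a homogeneous $b\in B_{n}$, use that $\pi(b_{(1)})\tensor b_{(2)}$ lands in $\qebO{T}\tensor\qeb{T}$, and compare with the adjoint action using relations \textbf{(R3\cprime)} and the pairing values $\rip{K_i}{K_j}=q^{\ip{i_1(i)}{i_2(j)}}$. Everything else—the relation-checking for $\mathbb{Q}$ and the generation statement for $\ker\mathbb{Q}$—is routine bookkeeping built on Drinfel\cprime d's projection and Proposition~\ref{s:defnofB:extn}.
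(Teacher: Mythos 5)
Your proposal follows essentially the same route as the paper: the paper defines the projection $\Phi$ by sending both $K_{\mu}\tensor 1$ and $1\tensor K_{\mu}$ to $K_{p_{1}(\mu)}\tensor L_{p_{2}(\mu)}$ (your splitting along $Z'$ and $Z''$ is exactly this map in the notation of Proposition~\ref{s:defnofB:extn}), identifies the kernel as generated by $K_{\mu}\tensor\inv{K}_{\mu}-1\tensor 1$, and concludes by observing that the two Cartan copies have equal left actions on $B$. Your step~(4), comparing the adjoint and coregular actions explicitly, is precisely the point the paper asserts without computation, so your additional care there is consistent with (and slightly more detailed than) the published argument.
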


\begin{proof}
The double $D(\qebO{T})$ is generated by $\{ F_{j} \tensor 1,\ 1\tensor E_{j},\ K_{\mu} \tensor 1,\ 1\tensor K_{\mu} \mid j \in \iota(J),\  \mu \in Z \}$ and we may define $\Phi\colon D(\qebO{T}) \onto \widetilde{\qea{J}}$ by 
\begin{align*} 
\Phi(F_{j} \tensor 1) & = F_{j} \tensor 1, &
\Phi(1 \tensor E_{j}) & =  E_{j} \tensor 1, \\
\Phi(K_{\mu} \tensor 1) & = K_{p_{1}(\mu)} \tensor L_{p_{2}(\mu)}, &
\Phi(1 \tensor K_{\mu}) & =  K_{p_{1}(\mu)} \tensor L_{p_{2}(\mu)}, 
\end{align*}
extended linearly and multiplicatively, with $p_{1}$, $p_{2}$ as above.  It is easily verified that this is a Hopf algebra projection, as the action of $k[Z/Z']$ on $\qea{J}$ reproduces the appropriate commutation relations.
\end{proof}

The kernel of this map $\Phi$ is clearly generated by $\{ K_{\mu} \tensor \inv{K}_{\mu} - 1\tensor 1 \mid \mu \in Z \}$, since as for Drinfel\cprime d's projection $\mathbb{P}$ we identify $K_{\mu} \tensor 1$ and $1 \tensor K_{\mu}$ in the image.  The kernel of $\Phi$ annihilates $B$, since the identified elements in the quotient $K_{\mu} \tensor 1$ and $1 \tensor K_{\mu}$ have equal (left) actions on $B$.  Hence $B$ is a $\widetilde{\qea{J}}$-module.  Indeed, unwinding the double and bosonisation formul\ae, one sees that the action is precisely the restriction of the adjoint action of $\qea{T}$ on itself.  Consequently by using this action we may construct $B\rtimesdot \widetilde{\qea{J}}$, which has $B\rtimesdot \qebO{T}\iso \qeb{T}$ as a subalgebra.

We conclude by showing that $\qea{T}$ is isomorphic to the double-bosonisation of $B$ and its dual by $\widetilde{\qea{J}}$.  We carried out the above analysis on $\qeb{T}$, to obtain a braided Hopf algebra $B$ such that there is an isomorphism $\beta_{\scriptscriptstyle \leqslant}\colon \qeb{T} \stackrel{\iso}{\too} B \rtimesdot \qebO{T}$.  However, we could equally well start with the self-dual Hopf algebra $\qebplus{T}$ and obtain a braided Hopf algebra $\op{(B')}$ in the braided category of right $D(\qebplus{T}_{[0]})$-modules such that $\beta_{\scriptscriptstyle \geqslant}\colon \qebplus{T} \stackrel{\iso}{\too} \qebplus{T}_{[0]} \ltimesdot \op{(B')}$.  Furthermore, $B$ and $B'$ are dually paired braided Hopf algebras, via Lusztig's pairing.

\begin{theorem}\label{s:defnofB:isotoDB}
Let $\mathfrak{J} \subseteq_{\iota} \mathfrak{T}$ be a sub-root datum of $\mathfrak{T}$.  Then 
\[ \qea{T} \iso B \rtimesdot \widetilde{\qea{J}} \ltimesdot \op{(B')} \]
as Hopf algebras.
\end{theorem}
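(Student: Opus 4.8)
The plan is to assemble this from the pieces already in place, namely the two ``half'' decompositions for $\qeb{T}$ and $\qebplus{T}$, the fact that $\widetilde{\qea{J}}$ is a quotient of $D(\qebO{T})$ acting on $B$ (Lemma~\ref{s:defnofB:UqJextquotofD}), and Drinfel\cprime d's recovery of $\qea{T}$ as a quotient of $D(\qeb{T})$. The strategy is to realise $D(\qeb{T})$ as a double-bosonisation built from $D(\qebO{T})$ with braided factor $B$ (and its dual $\op{(B')}$), then push the whole picture through Drinfel\cprime d's projection $\mathbb{P}$, showing that the identification of the two Cartan copies is exactly the passage from $D(\qebO{T})$ to $\widetilde{\qea{J}}$ on the base, leaving $B$ and $\op{(B')}$ untouched.

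First I would observe that by Theorem~\ref{s:defnofB:Bexists} and its right-handed analogue we have $\beta_{\scriptscriptstyle\leqslant}\colon \qeb{T}\iso B\rtimesdot\qebO{T}$ and $\beta_{\scriptscriptstyle\geqslant}\colon \qebplus{T}\iso\qebplus{T}_{[0]}\ltimesdot\op{(B')}$, with $B$, $B'$ dually paired via Lusztig's pairing as noted just before the statement. Forming the Drinfel\cprime d double $D(\qeb{T})=\qeb{T}\dbxsum\qebplus{T}$ and substituting these isomorphisms, I would identify the resulting double cross product with the double-bosonisation $B\rtimesdot D(\qebO{T})\ltimesdot\op{(B')}$: the mutual actions of $\qeb{T}$ and $\qebplus{T}$ restrict to the Cartan parts to give $D(\qebO{T})=\qebO{T}\dbxsum\qebplus{T}_{[0]}$, while the cross-relations between $B$ and $\op{(B')}$ are precisely those of Definition~\ref{s:prelims:dbosdefn} with respect to the weak quasitriangular system on $k[Z]$ and Lusztig's pairing (this is the content of the original Majid double-bosonisation realisation of $D(\qeb{T})$, which I would cite from \cite{DoubleBos} or \cite[Ch.~17--19]{QGP}). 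This step is mostly a matter of matching formulae, using that bosonisation and the double cross product are associative/compatible in the appropriate sense.

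Next I would apply Drinfel\cprime d's projection $\mathbb{P}\colon D(\qeb{T})\onto\qea{T}$, which quotients by the ideal generated by $K_{\mu}\tensor\inv{K}_{\mu}-1\tensor 1$, i.e.\ identifies the two Cartan copies. Under the identification of the previous paragraph this ideal lies inside the base factor $D(\qebO{T})$, and by Lemma~\ref{s:defnofB:UqJextquotofD} the corresponding quotient of $D(\qebO{T})$ is exactly $\widetilde{\qea{J}}$, with the kernel annihilating $B$ (and, dually, $B'$). Hence $\mathbb{P}$ carries $B\rtimesdot D(\qebO{T})\ltimesdot\op{(B')}$ onto $B\rtimesdot\widetilde{\qea{J}}\ltimesdot\op{(B')}$, and since the braided factors are unaffected and the base map is a Hopf projection with kernel acting as zero on $B$ and $B'$, this is again a double-bosonisation in the sense of Definition~\ref{s:prelims:dbosdefn} (the weak quasitriangular data on $k[Z]$ descending to $\widetilde{\qea{J}}$). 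Composing, we obtain a surjective Hopf algebra map $\qea{T}\onto B\rtimesdot\widetilde{\qea{J}}\ltimesdot\op{(B')}$; injectivity follows by a dimension/graded-component count, or more cleanly by exhibiting the inverse on generators ($F_{i}\mapsto$ its image in $B$ or in $\widetilde{\qea{J}}$ according as $i\in D$ or $i\in\iota(J)$, $E_{i}$ dually, $K_{\mu}\mapsto$ the appropriate element of $\widetilde{\qea{J}}$) and checking relations \textbf{(R1)}--\textbf{(R6)} hold in the target, which they do because the cross-relations of the double-bosonisation reproduce \textbf{(R4)} and the $\qea{J}$-module structure on $B$ is the restricted adjoint action.

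The main obstacle I expect is the bookkeeping in the middle step: verifying that substituting the two Radford--Majid isomorphisms into $D(\qeb{T})$ genuinely yields the double-bosonisation $B\rtimesdot D(\qebO{T})\ltimesdot\op{(B')}$ on the nose, rather than up to some twist. One must check that the coactions of $\qebO{T}$, resp.\ $\qebplus{T}_{[0]}$, induced on $B$, resp.\ $\op{(B')}$, by the split projections are compatible with the $\curly{R}$, $\bar{\curly{R}}$ entering the cross-relations, and that the two bosonisations glue along the common double $D(\qebO{T})$ exactly as Definition~\ref{s:prelims:dbosdefn} prescribes. This is essentially an instance of the general ``double-bosonisation commutes with taking doubles'' principle underlying Majid's original construction, so I would invoke \cite{DoubleBos} for the well-definedness and concentrate the written proof on identifying the data $(\widetilde{\qea{J}}, B, \op{(B')})$ correctly and on the final injectivity argument.
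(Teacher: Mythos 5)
Your proposal is correct, and it reaches the conclusion through the same circle of ideas as the paper but enters at a different point. The paper starts from the target object: it forms $D(B\rtimesdot\widetilde{\qea{J}})$, invokes Majid's result (\cite[Theorem~6.2]{DoubleBos}) that the double-bosonisation $B\rtimesdot\widetilde{\qea{J}}\ltimesdot\op{(B')}$ is the quotient of this double obtained by identifying the two copies of $\widetilde{\qea{J}}$, identifies $B\rtimesdot\widetilde{\qea{J}}$ and $\widetilde{\qea{J}}\ltimesdot\op{(B')}$ with the subalgebras $\qeb{T}\qea{J}$ and $\qea{J}\qebplus{T}$ of $\qea{T}$, and then argues that the resulting quotient of $D(\qeb{T}\qea{J})$ is $\qea{T}$ by the same argument as for Drinfel\cprime d's projection. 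You instead work with the smaller double $D(\qeb{T})=\qeb{T}\dbxsum\qebplus{T}$, first realising it as $B\rtimesdot D(\qebO{T})\ltimesdot\op{(B')}$ via the two Radford--Majid isomorphisms, and then applying $\mathbb{P}$ directly, using Lemma~\ref{s:defnofB:UqJextquotofD} to see that identifying the Cartan copies collapses the base $D(\qebO{T})$ onto $\widetilde{\qea{J}}$ while leaving $B$ and $\op{(B')}$ untouched. Each route leans on Majid's machinery at a different level: the paper applies the ``double-bosonisation as a quotient of a double'' theorem to the parabolic-level bosonisation and only needs Lemma~\ref{s:defnofB:UqJextquotofD} implicitly (to make $B$ a $\widetilde{\qea{J}}$-module so the statement is well-posed), whereas you apply the ``double of a bosonisation'' theorem at the Borel level and use the lemma as the central step identifying the base of the quotient. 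Your version makes the role of Drinfel\cprime d's projection more transparent, at the cost of the bookkeeping you flag, namely checking that the ideal generated by $K_{\mu}\tensor\inv{K}_{\mu}-1\tensor 1$ sits entirely in the base factor and that quotienting the base of a double-bosonisation by an ideal acting trivially on $B$ and $B'$ again yields a double-bosonisation; this is the analogue of the paper's equally terse ``by the same argument as that for Drinfel\cprime d's projection,'' so the two proofs are of comparable rigour. Your closing injectivity check via generators is redundant once the chain of isomorphisms and quotient identifications is in place, but it does no harm.
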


\begin{proof}
The stated double-bosonisation is well-defined, as $\qea{T}$ has an associated weak quasitriangular system (see Subsection~\ref{s:prelims:ss:rootdata+QEAs}) and this restricts to $\widetilde{\qea{J}}$.  The cross-relation in the double-bosonisation is the quantized enveloping algebra defining relation \textbf{(R4)}, the commutation relation for $E_{i}$ and $F_{j}$---this relation is also encoded in the cross-relations of the double (see for example \cite[Section 3.2]{Joseph} or \cite[Example 18.8]{QGP}).

We may construct the double $D(B \rtimesdot \widetilde{\qea{J}})=(B \rtimesdot \widetilde{\qea{J}}) \dbxsum (\widetilde{\qea{J}} \ltimesdot \op{(B')})$ and the quotient obtained by identifying the two copies of $\widetilde{\qea{J}}$ is precisely the stated double-bosonisation, by \cite[Theorem~6.2]{DoubleBos}.  (Majid's result demonstrates that a double-bosonisation $B\rtimesdot H \ltimesdot B'$ may be constructed as a quotient of the double of $B\rtimesdot H$ in precisely this way.)

However, $B\rtimesdot \widetilde{\qea{J}}$ and $\widetilde{\qea{J}}\ltimesdot \op{(B')}$ are easily seen to be isomorphic to $\qeb{T}\qea{J}$ and $\qea{J}\qebplus{T}$ respectively.  Hence we see that this quotient of $D(B\rtimesdot \widetilde{\qea{J}}\iso D(\qeb{T}\qea{J})$ is isomorphic to $\qea{T}$, by the same argument as that for Drinfel\cprime d's projection $\mathbb{P}$.  So the double-bosonisation is isomorphic to the full quantized enveloping algebra.
\end{proof}

\vspace{1em}
\begin{example*} We conclude that we have $B=B(A_{3},A_{2},\iota,q)$ a Hopf algebra in the braided category of $\widetilde{\qea{sl_{\mathrm{3}}}}$-modules and its dual $B'$ such that
\[ \qea{sl_{\mathrm{4}}} \iso B \rtimesdot \widetilde{\qea{sl_{\mathrm{3}}}} \ltimesdot \op{(B')}. \]
We will describe $B$ explicitly at the end of Section~\ref{s:Bstr}.
\end{example*}

\vspace{1em}
\begin{example*} Recall that we have defined the direct sum $\mathfrak{T} \dsum \mathfrak{J}$ of two root data $\mathfrak{T}$, $\mathfrak{J}$ (Definition~\ref{s:defnofB:dsumofrd}) and $\mathfrak{T}$ is a sub-root datum of $\mathfrak{T} \dsum \mathfrak{J}$.  Now $\qea{T \dsum J} \iso \qea{T} \tensor \qea{J}$ since if $i\in I$ and $j\in J$, $C_{ij}=0$ and so $E_{i}E_{j}=E_{j}E_{i}$ and $F_{i}F_{j}=F_{j}F_{i}$.

Since $\mathfrak{T} \subseteq_{\iota} \mathfrak{T} \dsum \mathfrak{J}$, by the preceding Theorem, we obtain $B=B(\mathfrak{T} \dsum \mathfrak{J},\mathfrak{T},\iota,q)$ so that $\qea{\mathfrak{T} \dsum \mathfrak{J}} \iso B \rtimesdot \widetilde{\qea{\mathfrak{T}}} \ltimesdot \op{(B')}$.  We see that $\widetilde{\qea{T}}=\qea{T} \tensor U_{q}^{0}(\mathfrak{J})$ and $B=\qeaminus{J}$, $\op{(B')}=\qeaplus{J}$.  Then 
\begin{eqnarray*} \qea{\mathfrak{T} \dsum \mathfrak{J}} & \iso & B \rtimesdot (\qea{T} \tensor U_{q}^{0}(\mathfrak{J})) \ltimesdot \op{(B')} \\
 & \iso & \qea{T} \tensor (\qeaminus{J} \rtimesdot U_{q}^{0}(\mathfrak{J}) \ltimesdot \qeaplus{J} ) \\
 & \iso & \qea{T} \tensor \qea{J}.
\end{eqnarray*}
So the construction is compatible with direct sums.
\end{example*}
\vspace{1em}

\section{The structure of $B$}\label{s:Bstr}

From our results on general braided Hopf algebras $B$ arising from split projections of graded Hopf algebras, we know that $B=B(\mathfrak{T},\mathfrak{J},\iota,q)$ associated to $\mathfrak{J} \subseteq_{\iota} \mathfrak{T}$ is a graded braided Hopf algebra and an $\Ad$-submodule of $\qeb{T}$.  We now examine the module, algebra and braided-coalgebra structures of $B$ further.

We analyse the algebra structure of $B$, giving a set of generators.  In particular, these generators all have degree one.  We also examine the module structure of $B$ and see that $B_{1}$ is a direct sum of (possibly quotients of) Weyl modules and that the higher graded components are sums of submodules of tensor products of these.  Finally, we observe that the graded dual of $B$ is also generated in degree one and hence $B$ is a Nichols algebra.

\subsubsection*{Notation}  

For $S$ a finite set, denote by $S^{\nat}$ the set of all finite sequences of elements of $S$, including the empty sequence, $\emptyset$.  If $\alpha \in S^{\nat}$, $l(\alpha)$ will denote the length of $\alpha$; $l(\emptyset)=0$.  If $i\colon S \inj M$ is an injective map from $S$ into a commutative monoid $M$, we define the weight of $\alpha \in S^{\nat}$ with respect to $i$ to be $\weight{i}{\alpha}=\sum_{j=1}^{l(\alpha)} i(\alpha_{j})$.  We set $\weight{i}{\emptyset}=0$ (the identity element of $M$).  For $i_{1}\colon I \inj Y$, we will write $\weight{1}{\ }$ for $\weight{i_{1}}{\ }$ and similarly for $i_{2}\colon I \inj X$.  For $\gamma=(\gamma_{1},\dotsc ,\gamma_{l}) \in S^{\nat}$ set $F_{\gamma} \defeq F_{\gamma_{1}}\dotsm F_{\gamma_{l}}$.

\subsection{The algebra structure of $B$}\label{s:Bstr:ss:Balgstr}

From the general results, we know that $B=B(\mathfrak{T},\mathfrak{J},\iota,q)$ is a graded algebra; however the general results do not give us much more information about $B$ than this.  Since $\qea{T}$ is defined by generators and relations, we would also like to have a presentation for $B$.  As a first step, we may explicitly identify a set of generators of $B$, as follows.

\begin{theorem}\label{s:Bstr:alggensofB}  Let $A$ be the $\widetilde{\qea{J}}$-submodule of $B$ generated by the set $\{ F_{\gamma}H_{\weight{1}{\gamma}} \mid \gamma \in D^{\nat} \}$ and let $\curly{A}$ be the subalgebra of $B$ generated by $A$.  Then $\curly{A}=B$.
\end{theorem}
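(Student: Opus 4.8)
The plan is to show that the subalgebra $\curly{A}$, which is a priori only a subalgebra of the graded braided Hopf algebra $B$, is in fact all of $B$ by exploiting the reconstruction of $\qeb{T}$ as the bosonisation $B \rtimesdot \qebO{T}$ together with the explicit description of $B$ as $\Image \Pi$ (where $\Pi(h) = h_{(1)}((S\circ\iota\circ\pi)(h_{(2)}))$) from Subsection~\ref{ss:grandsplproj}. Since $\qeb{T}$ is generated as an algebra by the $F_i$ ($i\in I$) and the $K_\mu$ ($\mu\in Z$), and since $\inv{\Upsilon}(b\tensor h)=b\cdot\iota(h)$ identifies $\qeb{T}$ with $B\cdot\qebO{T}$ as an algebra, every element of $B$ is, modulo the right ideal generated by $\qebO{T}$, a product of the generators $F_i$, $K_\mu$; the point will be to push the $\qebO{T}$-part (the $F_j$ for $j\in\iota(J)$ and all the $K_\mu$) through to the right using the commutation relations \textbf{(R1)}--\textbf{(R3)} and then apply $\Pi$.

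First I would compute $\Pi$ on the relevant monomials. For a word $F_\gamma = F_{\gamma_1}\cdots F_{\gamma_l}$ with $\gamma\in D^\nat$, the grading forces $F_\gamma H_{\weight{1}{\gamma}}$ (and more generally $F_\gamma K_\mu$ for suitable $\mu$) to land in $B$ after applying $\Pi$; indeed one checks that $\Pi(F_\gamma K_\mu)$ is, up to a nonzero scalar coming from the $K$-$F$ commutation, exactly $F_\gamma H_{\weight{1}{\gamma}}$ times a power of $K$ that the coinvariance condition eliminates — this is the computation that identifies the stated generating set as $\Pi$ of the purely-$D$ monomials in $\qeb{T}$. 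The second and main step is an induction on the $\nat$-grading (equivalently, on the number of $F_d$ with $d\in D$ appearing): an arbitrary element of $B_n$ is $\Pi$ of a linear combination of monomials in the $F_i$ and $K_\mu$; using \textbf{(R1)}--\textbf{(R3)} I reorder each monomial so that all $F_j$ with $j\in\iota(J)$ and all $K_\mu$ are collected on the right, leaving a word of the form $(\text{product of }F_d\text{'s and }F_j\text{'s interleaved})\cdot(\text{element of }\qebO{T})$. Applying $\Pi$ and using that $\Pi|_B=\id_B$, $\Pi$ is graded, and that $\Pi$ kills the right $\qebO{T}$-factor appropriately, the image is expressed in terms of the adjoint action of $\qebO{T}$ — hence of $\qea{J}$ and $k[Z/Z']$, i.e. of $\widetilde{\qea{J}}$ — on shorter words, which by the inductive hypothesis lie in $\curly{A}$; the $\widetilde{\qea{J}}$-action preserves $A$ (by definition of $A$) and hence $\curly{A}$ (being an $\widetilde{\qea{J}}$-module algebra via the adjoint action), and the algebra product keeps us in $\curly{A}$.

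The technical heart — and the step I expect to be the main obstacle — is controlling the interleaving: when one moves an $F_j$ ($j\in\iota(J)$) rightwards past an $F_d$ ($d\in D$), the relations \textbf{(R6)} (the quantum Serre relations mixing $j$ and $d$) intervene, so one cannot simply commute $F$'s freely. The clean way around this is to argue at the level of the $\widetilde{\qea{J}}$-module structure rather than by brute-force reordering: observe that $\brad{\qea{J}}{-}$ (the braided/adjoint action of the $F_j$'s) applied to the degree-one elements $F_d H_{\weight{1}{d}}$ already produces, via the Serre relations, all the "mixed" degree-one elements one needs, so $A$ is genuinely the full span of $B_1$; then the higher components $B_n$ are spanned by products of elements of $B_1$ precisely because $B\rtimesdot\qebO{T}\iso\qeb{T}$ is generated in the $F$'s, so every element of $\qeb{T}$ of $\chi_D$-degree $n$ is a sum of products of $n$ many $F_d$'s sandwiched by $\qebO{T}$-elements, and projecting via $\Pi$ turns each such sandwich into a product (in $B$) of $n$ factors, each of which is a $\qebO{T}$-translate of some $F_d H_{\weight{1}{d}}$, hence lies in $A$. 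Thus $\curly{A}\supseteq B_n$ for all $n$, giving $\curly{A}=B$. I would present this second route as the proof, with the first (direct $\Pi$-computation on $D^\nat$-monomials) used only to verify that the listed elements do lie in $B$ and generate $A$.
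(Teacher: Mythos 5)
Your second route is essentially the paper's proof: the paper works with the isomorphism $\Upsilon\colon\qeb{T}\to B\rtimesdot\qebO{T}$, computes it on the generators $F_{i}$, $K_{\mu}$, and inducts on the length of a monomial $F_{\alpha}$, using the semidirect-product multiplication in $B\rtimesdot\qebO{T}$ to convert each interleaved $\qebO{T}$-factor into an adjoint-action (hence $\widetilde{\qea{J}}$-)translate of $F_{d}H_{d}$ --- exactly the mechanism you describe for turning a ``sandwich'' into a product of elements of $A$. You correctly identify and discard the only step that would fail (naively reordering $F_{j}$ past $F_{d}$ via \textbf{(R1)}--\textbf{(R3)}, which the quantum Serre relations obstruct), so the argument you actually present is sound and coincides with the paper's.
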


\begin{proof} Recall from Section~\ref{ss:grandsplproj} that we have an isomorphism $\Upsilon \colon \qeb{T} \to B \rtimesdot \qebO{T}$, $\Upsilon(h)=h_{(1)}((S \circ \iota \circ \pi)(h_{(2)})) \tensor h_{(3)}$.  We calculate $\Upsilon$ on the generators of $\qeb{T}$ and obtain
\begin{align*} \Upsilon(F_{i}) & = \begin{cases} 1 \tensor F_{i} & \text{if}\ i\in \iota(J) \\ F_{i}H_{i} \tensor \inv{H_{i}} & \text{if}\ i\in D \end{cases} \\
\Upsilon(K_{\mu}) & = 1 \tensor K_{\mu} \quad \forall\, \mu \in Z. \end{align*}

\noindent For $\emptyset \in D^{\nat}$, $F_{\emptyset}H_{\weight{1}{\emptyset}}=H_{0}=1$ (by convention).  

We wish to show that $\Upsilon(\qeb{T}) \subseteq \curly{A} \tensor \qebO{T}$.  Consider a monomial $F_{\alpha}K_{\mu}$, $\alpha \in I^{\nat}$, $\mu \in Z$.  Recall that monomials of this form are a basis for $\qeb{T}$.  Then $\Upsilon(F_{\alpha}K_{\mu})=\Upsilon(F_{\alpha})(1 \tensor K_{\mu})$ and so we need only show that $\Upsilon(F_{\alpha}) \in \curly{A} \tensor \qebO{T}$.

We proceed by induction on $l(\alpha)$.  For $l(\alpha)=1$, the above formul\ae\ for $\Upsilon(F_{i})$ suffice.  Assume now that $\Upsilon(F_{\alpha}) \in \curly{A} \tensor \qebO{T}$ for all $\alpha \in I^{\nat}$ with $l(\alpha)=r$, for some $r$.  Let $\beta \in I^{\nat}$ with $l(\beta)=r+1$.  Then we may write $F_{\beta}=F_{\alpha}F_{i}$ with $\alpha \in I^{\nat}$, $l(\alpha)=r$ and $i\in I$.  Write $\Upsilon(F_{\alpha})=x^{(1)} \tensor x^{(2)}$ in Sweedler notation, with $x^{(1)} \in \curly{A}$ and $x^{(2)} \in \qebO{T}$ by the inductive hypothesis.  

We then have two cases:
\begin{enumerate}
\item if $i \in \iota(J)$ then {\allowdisplaybreaks \begin{align*} \Upsilon(F_{\beta}) & = \Upsilon(F_{\alpha})\Upsilon(F_{i}) \\ & = (x^{(1)} \tensor x^{(2)})(1 \tensor F_{i}) \\ & = x^{(1)} \tensor x^{(2)}F_{i} \\ & \in \curly{A} \tensor \qebO{T}. \end{align*} }
\item if $i \in D$ then {\allowdisplaybreaks \begin{align*} \Upsilon(F_{\beta}) & = \Upsilon(F_{\alpha})\Upsilon(F_{i}) \\ & = (x^{(1)} \tensor x^{(2)})(F_{i}H_{i} \tensor \inv{H_{i}}) \\ & = x^{(1)}\Ad_{\iota({x^{(2)}}_{(1)})}(F_{i}H_{i}) \tensor {x^{(2)}}_{(2)}\inv{H_{i}} \\ & \qquad \qquad \qquad \text{(by the form of the product in $B \rtimesdot \qebO{T}$)} \\ & \in \curly{A} \tensor \qebO{T} \end{align*} }
\end{enumerate}
Note that $\Ad_{\iota({x^{(2)}}_{(1)})}(F_{i}H_{i}) \in \curly{A}$ since $x^{(2)} \in \qebO{T}$ and $\qebO{T}$ is a sub-Hopf algebra of $\qeb{T}$.  Thus, $\Upsilon(\qeb{T}) \subseteq \curly{A} \tensor \qebO{T}$ but then $\Upsilon(B) = B\tensor 1 \subseteq \curly{A} \tensor 1$ and hence $B=\curly{A}$.
\end{proof}

From this, the following is immediate.

\begin{corollary}\label{s:Bstr:B1generatesB} The submodule $B_{1}$, which is the first graded component of $B$, is generated as a $\widetilde{\qea{J}}$-module by the set $\{ F_{d}H_{d} \mid d\in D \}$. Furthermore $B$ is generated as an algebra by $\widetilde{B_{1}}=B_{1} \dsum B_{0} = B_{1} \dsum k1$.
\end{corollary}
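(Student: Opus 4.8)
The plan is to deduce both assertions of the corollary from Theorem~\ref{s:Bstr:alggensofB} by keeping track of the $\nat$-grading of the submodule $A$ occurring there. First I would check that the $\integ$-grading of $\qea{T}$ by $\chi_{D}$ (Section~\ref{s:defnofB:ss:HAqebI}) is preserved by the adjoint action of $\widetilde{\qea{J}}$: each generator $E_{j}$, $F_{j}$ ($j\in\iota(J)$), $K_{\mu}$ ($\mu\in Z$) and $L_{\beta}$ ($\beta\in Z/Z'$) of $\widetilde{\qea{J}}$ has degree $0$, and the $\widetilde{\qea{J}}$-action on $B$ is the restriction of $\Ad$ (see the discussion following Lemma~\ref{s:defnofB:UqJextquotofD}), so the induced $\nat$-grading on $B$ is preserved as well. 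Consequently $A$, being the $\widetilde{\qea{J}}$-submodule generated by the homogeneous elements $F_{\gamma}H_{\weight{1}{\gamma}}$ (each of degree $l(\gamma)$), is graded, $A=\bigdsum_{n\in\nat}A_{n}$, with $A_{n}$ the $\widetilde{\qea{J}}$-submodule generated by those generators with $l(\gamma)=n$. In particular $A_{0}=\widetilde{\qea{J}}\laction 1=k1$ (since $h\laction 1=\counit(h)1$), and $A_{1}$ is the $\widetilde{\qea{J}}$-submodule generated by $\{F_{d}H_{d}\mid d\in D\}$.

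For the first assertion I would argue as follows. By Theorem~\ref{s:Bstr:alggensofB}, $B=\curly{A}$, the subalgebra of $B$ generated by $A$; since $A$ is a graded subspace, $B$ is spanned by products of homogeneous elements of $A$, and the degree-one part $B_{1}=\curly{A}_{1}$ is spanned by those products whose factor-degrees sum to $1$. In any such product exactly one factor lies in $A_{1}$ and every other factor lies in $A_{0}=k1$, so the product lies in $A_{1}$. Hence $B_{1}=A_{1}$, which is precisely the $\widetilde{\qea{J}}$-module generated by $\{F_{d}H_{d}\mid d\in D\}$.

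For the second assertion, let $\curly{B}$ be the subalgebra of $B$ generated by $B_{1}$. Since $\Ad_{u}(xy)=\Ad_{u_{(1)}}(x)\,\Ad_{u_{(2)}}(y)$ and $B$ is a subalgebra of $\qeb{T}$ on which $\widetilde{\qea{J}}$ acts by restricted $\Ad$, $B$ is a $\widetilde{\qea{J}}$-module algebra, so the subalgebra generated by any $\widetilde{\qea{J}}$-submodule of $B$ is again a submodule; in particular $\curly{B}$ is a $\widetilde{\qea{J}}$-submodule. A repeated application of \textbf{(R3\cprime)} gives, for $\gamma\in D^{\nat}$,
\[ (F_{\gamma_{1}}H_{\gamma_{1}})\dotsm(F_{\gamma_{l(\gamma)}}H_{\gamma_{l(\gamma)}}) = q^{-\sum_{j<k}\gamma_{j}\Cdot\gamma_{k}}\,F_{\gamma}H_{\weight{1}{\gamma}}, \]
so each module generator $F_{\gamma}H_{\weight{1}{\gamma}}$ of $A$ is a nonzero scalar multiple of a product of elements of $B_{1}=A_{1}$, hence lies in $\curly{B}$. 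Being a $\widetilde{\qea{J}}$-submodule containing all these generators, $\curly{B}\supseteq A$; being a subalgebra containing $A$, $\curly{B}\supseteq\curly{A}=B$. Thus $\curly{B}=B$, so $B$ is generated as an algebra by $B_{1}$, and a fortiori by $\widetilde{B_{1}}=B_{1}\dsum k1$ (a unital subalgebra containing $B_{1}$ automatically contains $B_{0}=k1$).

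The only genuinely substantive step is the first one: verifying that the $\widetilde{\qea{J}}$-action is degree-preserving, so that $A$ is a graded submodule with $A_{0}=k1$ and $A_{1}$ as described. After that everything is formal bookkeeping with the grading, the displayed identity — an immediate consequence of \textbf{(R3\cprime)} — being the sole computation.
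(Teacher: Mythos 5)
Your proof is correct, but it reaches the corollary by a different route than the paper. The paper's own proof is a one-line appeal back to the \emph{proof} of Theorem~\ref{s:Bstr:alggensofB}: the induction there builds $\Upsilon(F_{\beta})$ up one factor at a time, and in case \textit{ii}) each new factor contributed to the $B$-component is an element $\Ad_{\iota(h)}(F_{i}H_{i})$ with $i\in D$, i.e.\ a degree-one module translate of some $F_{d}H_{d}$; combined with $B_{0}=k$ this gives both assertions directly, and in particular shows that the length-$\geq 2$ generators $F_{\gamma}H_{\weight{1}{\gamma}}$ of $A$ were never really needed. You instead work only from the \emph{statement} of the theorem, which forces you to supply two extra ingredients: the observation that the $\widetilde{\qea{J}}$-action (being restricted $\Ad$ by degree-zero elements) preserves the $\chi_{D}$-grading, so that $A$ is graded with $A_{0}=k1$ and $A_{1}$ the module generated by $\{F_{d}H_{d}\mid d\in D\}$; and the commutation identity from \textbf{(R3\cprime)} exhibiting each $F_{\gamma}H_{\weight{1}{\gamma}}$ as a nonzero scalar multiple of $(F_{\gamma_{1}}H_{\gamma_{1}})\dotsm(F_{\gamma_{l}}H_{\gamma_{l}})$, which makes the higher-length generators redundant. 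Both ingredients check out (the module-algebra property $\Ad_{u}(xy)=\Ad_{u_{(1)}}(x)\Ad_{u_{(2)}}(y)$ justifies that the subalgebra generated by $B_{1}$ is again a submodule, and $q$ not a root of unity makes your scalar invertible), so the argument is complete. What your version buys is self-containedness -- the corollary follows from the theorem as a black box -- at the cost of one explicit computation that the paper avoids by re-reading its induction.
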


\begin{proof}  This follows from the proof of the theorem---in particular, part \textit{ii)} (the case $i \in D$) and the fact that $B_{0}=k$.
\end{proof}

\subsection{The module structure of $B$}\label{s:Bstr:ss:Bmodstr}

We would like some additional information on the module structure of $B$, in particular regarding its set of weights.  Recall that $B$ is an $\mathrm{Ad}$-submodule of $\qeb{T}$.  Although we want to know the module structure of $B$ as a $\widetilde{\qea{J}}$-module, we first consider the adjoint action of $\qea{T}$ on $\qeb{T}$.  For $\alpha \in I^{\nat}$, $\mu \in Z$, the weight of $F_{\alpha}K_{\mu}$ for the adjoint action is $\weight{2}{-\alpha} = -\sum_{j=1}^{l(\alpha)} i_{2}(\alpha_{j})$, where $i_{2}\colon I \inj X$ is the injection of the index set $I$ into the character lattice $X$.  Since the $F_{\alpha}K_{\mu}$ span $\qeb{T}$, the set of weights of $\qeb{T}$ for $\mathrm{Ad}$ is $-\nat[i_{2}(I)]$.  Define $\mult{D}{\alpha}=|\{ \alpha_{j} \mid j \in D \} |$ for $\alpha \in I^{\nat}$ and $\mult{D}{\weight{2}{-\alpha}} \defeq \mult{D}{\alpha}$.  Note that $\mult{D}{\alpha}=\deg F_{\alpha}$ for the grading described at the start of Subsection~\ref{s:defnofB:ss:HAqebI}.

We have $F_{d}H_{d} \in B$ for $d \in D$; by the above, $F_{d}H_{d}$ has weight $\weight{2}{-d}$.  Now $\weight{2}{-d}$ is not in general $I$-dominant (see Definition~\ref{s:prelims:Sdominant}): $\ip{i_{1}(d)}{-i_{2}(d)}=-C_{dd}=-2$.  However, its image under $\rho$ is $J$-dominant.

Next we consider primitive vectors for the action of $\qea{J}$, that is, $b\in B$ such that $E_{j} \laction b = 0$ for all $j\in J$.  The elements $F_{\gamma}H_{\weight{1}{\gamma}}$, $\gamma \in D^{\nat}$, which have already appeared in Theorem~\ref{s:Bstr:alggensofB} are primitive vectors of weight $\rho(-\weight{1}{\gamma})$, since $E_{\iota(j)} \laction F_{\gamma}H_{\weight{1}{\gamma}}=\Lbracket{E_{\iota(j)}}{F_{\gamma}}H_{\weight{1}{\gamma}}= 0$ by \textbf{(R4)}: $\gamma_{i} \neq \iota(j)$ for all $i$.

We have that $B_{n}$ is a $\qea{J}$-submodule of $B$ for all $n\in \nat$.  Since $B_{0}=k$, $B_{0}$ is the trivial $\qea{J}$-module. By Corollary~\ref{s:Bstr:B1generatesB}, $B_{1}$ is generated as a $\widetilde{\qea{J}}$-module by its primitive vectors, namely the set $\{ F_{d}H_{d} \mid d\in D \}$.  Let $V(\lambda_{d}')$ be the submodule of $B_{1}$ generated by $F_{d}H_{d}$.  We remark that although $V(\lambda_{d_{1}}') \intersection V(\lambda_{d_{2}}') = 0$ for $d_{1} \neq d_{2}$, we may have $V(\lambda_{d_{1}}') \iso V(\lambda_{d_{2}}')$ as $\widetilde{\qea{J}}$-modules.  Indeed, possibly $\lambda_{d_{1}}'=\lambda_{d_{2}}'$.  Note also that $B_{1}$ is a direct sum of finitely many submodules $V(\lambda_{d}')$, since $D$ is finite.

A straightforward application of the universal property of Verma modules and the method of the proof of \cite[Proposition 3.5.8]{LusztigBook} yields the following.

\begin{proposition}\label{s:Bstr:B1integ} For all $d \in D$, $V(\lambda_{d}')$ is integrable, therefore $B_{1}$ is integrable.  If $L(\lambda_{d}')$ is finite-dimensional for all $d\in D$ then $V(\lambda_{d}')$ and $B_{1}$ are finite-dimensional.  \qed
\end{proposition}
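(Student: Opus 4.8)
The plan is to set up $V(\lambda_d')$ as a quotient of a Verma module for $\qea{J}$ and then invoke the standard integrability argument of Lusztig.

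First I would observe that, by the analysis preceding the proposition, $F_d H_d$ is a primitive vector for the $\qea{J}$-action: $E_{\iota(j)} \laction F_d H_d = 0$ for all $j \in J$ (this was established via relation \textbf{(R4)}), and moreover it is a weight vector of weight $\rho(-\weight{1}{d}) = \lambda_d' \in X'$. One checks that $\lambda_d'$ is $J$-dominant: indeed $\ip{i_1'(j)}{\lambda_d'}' = \ip{i_1(\iota(j))}{-i_2(d)} = -C_{\iota(j)\,d}$, and since $\iota(j) \neq d$ (as $d \in D = I \setminus \iota(J)$), this is $-C_{\iota(j)\,d} \in \nat$ because off-diagonal Cartan matrix entries are non-positive. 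So the universal property of Verma modules gives a surjection $\Delta(\lambda_d') \onto V(\lambda_d')$ of $\qea{J}$-modules sending $v_{\lambda_d'}$ to $F_d H_d$.

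Next I would run the argument of \cite[Proposition 3.5.8]{LusztigBook}: one shows that in a highest-weight module generated by a primitive vector of $J$-dominant weight $\lambda$, the elements $F_j^{n} v$ vanish for $n$ sufficiently large (specifically $n > \ip{i_1'(j)}{\lambda}'$), using the commutation relation for $E_j$ and $F_j^n$ in $\qea{J}$ together with the fact that the relevant $q_j$-integer coefficient is nonzero because $q$ is not a root of unity. This forces each generator $F_j$ to act locally nilpotently on $V(\lambda_d')$, which together with the weight-space decomposition is exactly integrability. Since $B_1 = \bigoplus_{d \in D} V(\lambda_d')$ is a finite direct sum of integrable modules, $B_1$ is integrable. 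For the finite-dimensionality statement, I would note that $V(\lambda_d')$ is a highest-weight module with highest weight $\lambda_d'$, hence a quotient of $\Delta(\lambda_d')$; if $L(\lambda_d')$ is finite-dimensional then $\Delta(\lambda_d')$ has a unique simple quotient $L(\lambda_d')$ that is finite-dimensional, but one needs slightly more to conclude $V(\lambda_d')$ itself is finite-dimensional — namely that $V(\lambda_d')$, being integrable, must itself be a sum of finite-dimensional weight spaces supported on finitely many weights, which for integrable highest-weight modules over $\qea{J}$ follows from the classification (the module is then a quotient of $\Delta(\lambda_d')$ that is automatically $L(\lambda_d')$ when $\lambda_d'$ is such that $L(\lambda_d')$ is finite-dimensional, by the rigidity of integrable highest-weight modules in the non-root-of-unity case).

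The main obstacle is the finite-dimensionality half: one must be careful that $V(\lambda_d')$ need not a priori equal $L(\lambda_d')$ — it is only known to be a quotient of the Verma module — so the argument must genuinely use integrability plus the structure theory of integrable highest-weight modules (\cite[Chapter 6]{LusztigBook}, \cite[Chapter 5]{Jantzen}) to pin down that an integrable highest-weight module of finite-type-dominant weight is finite-dimensional. The integrability half, by contrast, is a routine transcription of Lusztig's proof and presents no real difficulty, which is presumably why the authors relegate it to a citation.
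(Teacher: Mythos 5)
Your overall route is the paper's intended one (the paper offers only the remark that the result follows from the universal property of Verma modules and the method of Lusztig's Proposition~3.5.8), and your first paragraph --- primitivity of $F_{d}H_{d}$, the computation $\ip{i_{1}(\iota(j))}{-i_{2}(d)}=-C_{\iota(j)\,d}\in\nat$ showing $J$-dominance, and the resulting surjection $\Delta(\lambda_{d}')\onto V(\lambda_{d}')$ --- is correct. The finite-dimensionality half is also handled correctly: an integrable highest-weight $\qea{J}$-module of highest weight $\lambda_{d}'$ is a quotient of the maximal integrable quotient of $\Delta(\lambda_{d}')$, which at generic $q$ is $L(\lambda_{d}')$, so $V(\lambda_{d}')\iso L(\lambda_{d}')$ and finiteness follows.

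There is, however, a genuine gap in your integrability step. You assert that ``in a highest-weight module generated by a primitive vector of $J$-dominant weight $\lambda$, the elements $F_{j}^{n}v$ vanish for $n>\ip{i_{1}'(j)}{\lambda}'$,'' justified by the $E_{j}$--$F_{j}^{n}$ commutation relation and the non-vanishing of $q_{j}$-integers. That statement is false for highest-weight modules in general: the Verma module $\Delta(\lambda)$ is itself generated by a primitive vector of dominant weight and $F_{j}^{n}v_{\lambda}\neq 0$ for all $n$. The commutation relation only shows that $F_{j}^{\ip{i_{1}'(j)}{\lambda}'+1}v$ is again a primitive vector (the critical $q$-integer coefficient \emph{vanishes} there); it does not show this vector is zero, so your argument as written would ``prove'' that Verma modules are integrable. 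The missing input is that $V(\lambda_{d}')$ is not an abstract highest-weight module but a concrete $\Ad$-submodule of $\qeb{T}$: one computes $\Adj{F_{\iota(j)}}{x}=(F_{\iota(j)}x-q^{-\iota(j)\Cdot d}xF_{\iota(j)})H_{\iota(j)}$ on weight vectors, so that $(\Ad_{F_{\iota(j)}})^{1-C_{\iota(j)\,d}}(F_{d}H_{d})$ is, up to an invertible group-like factor, exactly the quantum Serre element of \textbf{(R6)} and hence zero; since $1-C_{\iota(j)\,d}=\ip{i_{1}'(j)}{\lambda_{d}'}'+1$, this exhibits $V(\lambda_{d}')$ as a quotient of $\Delta(\lambda_{d}')/\sum_{j}\qea{J}F_{j}^{\ip{i_{1}'(j)}{\lambda_{d}'}'+1}v_{\lambda_{d}'}$, and it is \emph{that} quotient which Lusztig's Proposition~3.5.8 shows is integrable. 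With this step restored your proof matches the paper's.
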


By Theorem~\ref{s:Bstr:alggensofB} the submodules $B_{n}$, $n\geq 2$, are direct sums of submodules of tensor products of the $V(\lambda_{d}')$.  Hence we may deduce the following.

\begin{theorem}\label{s:Bstr:Binteg} $B$ is integrable, as a direct sum of the $B_{n}$, which are integrable, and $B$ is a direct sum of quotients of Weyl modules and tensor products of these. \qed
\end{theorem}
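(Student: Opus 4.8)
The plan is to build the proof on the structural results just established, principally Theorem~\ref{s:Bstr:alggensofB} and Proposition~\ref{s:Bstr:B1integ}, together with the fact that integrability is inherited by subquotients and by (suitable) tensor products of integrable modules in the category of weight modules for $\qea{J}$. First I would recall that a $\qea{J}$-module is integrable if it is a weight module on which each $E_{j}$ and $F_{j}$ act locally nilpotently; this property passes to submodules, quotient modules, and finite direct sums, and — by the usual argument using the quantum Leibniz rule and the fact that the $E_{j},F_{j}$ act via $\coproduct$ on a tensor product — to tensor products of two integrable modules, hence to finite tensor products. Since the action of $\widetilde{\qea{J}}$ on $B$ extends that of $\qea{J}$ and the extra group-like generators $L_{\beta}$ act diagonally on weight spaces (preserving the weight-space decomposition), integrability as a $\qea{J}$-module is exactly what we need to record, and it is unaffected by the $k[Z/Z']$-part.

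Next I would assemble the decomposition of $B$. By Corollary~\ref{s:Bstr:B1generatesB} and the discussion preceding Proposition~\ref{s:Bstr:B1integ}, $B_{1}=\bigdsum_{d\in D} V(\lambda_{d}')$, where each $V(\lambda_{d}')$ is a quotient of the Verma module $\Delta(\lambda_{d}')$, hence a quotient of a Weyl module in the relevant sense, and is integrable by Proposition~\ref{s:Bstr:B1integ}. For $n\geq 2$, Theorem~\ref{s:Bstr:alggensofB} tells us that $B$ is generated as an algebra by $A\subseteq B_{1}$, so $B_{n}$ is spanned by products of $n$ elements drawn from the modules $V(\lambda_{d}')$; since multiplication in $B$ is a morphism of $\widetilde{\qea{J}}$-modules (because the $\widetilde{\qea{J}}$-action on $B$ is by the adjoint action of $\qea{T}$, which is an algebra action), each $B_{n}$ is a homomorphic image of a submodule of $\bigdsum V(\lambda_{d_{1}}')\tensor \cdots \tensor V(\lambda_{d_{n}}')$ and hence is a subquotient of a finite direct sum of $n$-fold tensor products of the $V(\lambda_{d}')$. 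Combining the previous paragraph's closure properties, each $B_{n}$ is integrable and is a direct sum of quotients of Weyl modules and tensor products of these; taking the (direct) sum over $n$ gives the same for $B=\bigdsum_{n} B_{n}$, which is the assertion.

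I would then make explicit the one point needing a little care, namely that the tensor-product description of $B_n$ is compatible with the module structure. Here the key is that the adjoint action of $\qea{T}$ on $\qeb{T}$ restricts, via the identification $B\rtimesdot\widetilde{\qea{J}}$ and the formula $h\laction b = \Ad_{\iota(h)}(b)$, to an action on $B$ for which multiplication $B\tensor B\to B$ intertwines the tensor-square action with the action on $B$ — this is precisely the statement that $\Ad$ is an algebra action and that $B$ is an $\Ad$-submodule, both already noted in the excerpt. Given that, the $n$-fold product map $V(\lambda_{d_1}')\tensor\cdots\tensor V(\lambda_{d_n}')\to B_n$ is a $\widetilde{\qea{J}}$-module map whose image is all of $B_n$ by Theorem~\ref{s:Bstr:alggensofB}, so $B_n$ is a quotient of a submodule (in fact of the whole module) of such a tensor product, as claimed.

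The main obstacle I anticipate is not any deep new idea but rather pinning down the tensor-product closure of integrability cleanly in this $q$-setting and over a base field of arbitrary characteristic: one must check that local finiteness/local nilpotence of the $E_j, F_j$ is preserved under $\coproduct$, using the $q$-binomial Leibniz formula, and that the weight-module condition (decomposition into the finite-type $X'$-weight spaces, with each $K_{i_1'(j)}$ acting by the appropriate power of $q$) is preserved. Since $q$ is not a root of unity this is standard — essentially the argument of \cite[Proposition 3.5.8]{LusztigBook} already invoked for Proposition~\ref{s:Bstr:B1integ} — but it is the place where one has to be slightly careful rather than merely formal. Everything else is a direct assembly of results already in hand.
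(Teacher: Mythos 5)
Your proposal is correct and follows essentially the same route as the paper, which deduces the theorem directly from Theorem~\ref{s:Bstr:alggensofB} (the $B_{n}$, $n\geq 2$, are built from tensor products of the $V(\lambda_{d}')$), Proposition~\ref{s:Bstr:B1integ}, and the closure of integrability under submodules, quotients, direct sums and tensor products. The only cosmetic difference is that you realise $B_{n}$ as a quotient of a tensor product via the multiplication map, where the paper phrases it as a direct sum of submodules of tensor products; both rest on the same closure properties, so nothing substantive changes.
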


\noindent As noted previously, one should consider this as an instance of the quantum Pl\"{u}cker embedding.

\subsection{$B$ is a Nichols algebra}\label{s:Bstr:ss:BNichols}

We complete our analysis of the structure of $B_{1}$, showing that the braided antipode and braided counit on $B_{1}$ are of enveloping algebra type.  We then prove that the braided Hopf algebra $B=B(\mathfrak{T},\mathfrak{J},\iota,q)$ is a Nichols algebra.

We recall from Section~\ref{ss:grandsplproj} that whenever $B$ arises from a split projection associated to an $\nat$-grading, the first homogeneous component $B_{1}$ is a subspace of the braided-primitive elements $\brprim{B}$.  This is a general fact but we point out that in our special case, this is easy to see directly.  Firstly, calculation of the braided coproduct shows that $\bcoproduct(F_{d}H_{d}) \in \brprim{B}$.  Then since $\bcoproduct$ is by definition a morphism in the (braided) module category, it is $\Ad$-invariant.  But $B_{1}$ is generated as a module by the elements $F_{d}H_{d}$ for $d\in D$ so by $\Ad$-invariance, all elements of $B_{1}$ must be braided-primitive.  

The same idea applies to the braided antipode and braided counit, as follows.  For $d \in D$, $\bantipode(F_{d}H_{d})=-F_{d}H_{d}$ and therefore $\bantipode(\Adj{x}{F_{d}H_{d}})=-\Adj{x}{F_{d}H_{d}}$ for any $x\in \qea{J}$.  Hence for all $b\in B_{1}$, we have $\bantipode b=-b$.  Similarly, we have $\bcounit(1)=1$ and $\bcounit(b)=0$ for all $b\in B_{n}$, $n \geq 1$.

For $\mathfrak{J} \subseteq_{\iota} \mathfrak{T}$ a sub-root datum, we have two measures of the ``difference'' between $\mathfrak{T}$ and $\mathfrak{J}$.  

\begin{definition}\label{s:defnofB:d:corank} Let $\mathfrak{J} \subseteq_{\iota} \mathfrak{T}$.
\begin{enumerate}
\item the quantity $| I \setminus \iota(J) |$ will be called the corank of $\mathfrak{J}$ in $\mathfrak{T}$;
\item the quantity $\dim B_{1}$ will be called the index of $\mathfrak{J}$ in $\mathfrak{T}$, denoted $| \mathfrak{T} \colon \mathfrak{J} |_{q}$.
\end{enumerate} \vspace{0em}
\end{definition}

\noindent The definition of corank mimics that in \cite[\S 2]{LieInduction}: it counts the number of nodes deleted from the associated Dynkin diagram.  One might naturally concentrate on the corank one case but the results here do not assume this.

Note that the index depends on $q$ as well as the data $\mathfrak{J} \subseteq_{\iota} \mathfrak{T}$.  More properly, the index is a measure of the ``difference'' between $\qeb{T}$ and $\qeb{J}$ with the latter a subalgebra of the former via a map induced by $\iota$.  Since $q$ is understood to be fixed throughout, we will simply say ``index'' rather than the more cumbersome ``$q$-index''.  Then we may talk of ``finite index'' and note this situation occurs in a large class of examples.

\begin{lemma} Let $\mathfrak{T}$ be a finite type root datum.  Then for every sub-root datum $\mathfrak{J} \subseteq_{\iota} \mathfrak{T}$ and every choice of $q$, the index $| \mathfrak{T} \colon \mathfrak{J} |_{q}$ is finite.
\end{lemma}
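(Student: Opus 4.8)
The plan is to reduce the finiteness of the index $\lvert \mathfrak{T} \colon \mathfrak{J} \rvert_{q} = \dim B_{1}$ to the finite-dimensionality of certain Weyl modules, and then invoke the standard fact that in finite type all Weyl modules are finite-dimensional. First I would recall from Corollary~\ref{s:Bstr:B1generatesB} that $B_{1}$ is generated as a $\widetilde{\qea{J}}$-module by the finite set $\{ F_{d}H_{d} \mid d\in D \}$, where $D = I\setminus\iota(J)$, and from the discussion preceding Proposition~\ref{s:Bstr:B1integ} that each $F_{d}H_{d}$ is a primitive vector (of weight $\rho(-i_{2}(d))$) for the $\qea{J}$-action. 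Hence $B_{1} = \bigdsum_{d\in D} V(\lambda_{d}')$, a finite direct sum of highest-weight $\qea{J}$-modules, each a quotient of the Verma module $\Delta(\lambda_{d}')$ and therefore a quotient of the Weyl module is \emph{not} automatic---but by Proposition~\ref{s:Bstr:B1integ} each $V(\lambda_{d}')$ is integrable, so in particular each is a quotient of $\Delta(\lambda_{d}')$ on which the simple generators act locally nilpotently.

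The key step is then: since $\mathfrak{T}$ is of finite type, so is the sub-root datum $\mathfrak{J}$ (a sub-diagram of a finite-type Dynkin diagram is again finite type), and moreover $\mathfrak{T}$, hence $\mathfrak{J}$, is $Y$-regular by the remark of Lusztig cited in Subsection~\ref{s:defnofB:ss:subrootdata} (\cite[\S 6.3.3]{LusztigBook}). Now $\rho$ preserves dominance, as noted after Definition~\ref{s:prelims:Sdominant}, so... actually one does not even need dominance of $\lambda_d'$: for $\qea{J}$ of finite type, \emph{every} integrable highest-weight module is finite-dimensional. I would quote \cite[\S 6.3.4--6.3.5]{LusztigBook} (or the standard fact, proved there, that an integrable module in the category $\curly{O}$ over a finite-type quantized enveloping algebra is finite-dimensional, and in particular $L(\mu)$ and any integrable quotient of $\Delta(\mu)$ are finite-dimensional). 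Applying this to each $V(\lambda_{d}')$ gives $\dim V(\lambda_{d}') < \infty$ for all $d\in D$, whence $\dim B_{1} = \sum_{d\in D}\dim V(\lambda_{d}') < \infty$ since $D$ is finite. Equivalently, one can phrase the final step through Proposition~\ref{s:Bstr:B1integ} directly: it already states that $B_{1}$ is finite-dimensional provided $L(\lambda_{d}')$ is finite-dimensional for all $d$, and finite type guarantees exactly this.

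The only genuine obstacle is making sure the relevant representation-theoretic input is available in the present generality (arbitrary characteristic, arbitrary non-root-of-unity $q$, root data rather than root systems). This is precisely why the paper restricts $q$ and cites \cite[Chapter~8]{Jantzen} for non-degeneracy; I would similarly cite Lusztig for the finite-dimensionality of integrable highest-weight modules over finite-type $\qea{J}$, noting that the argument there---acting by the simple generators, which are locally nilpotent on an integrable module, together with the finiteness of the Weyl group orbit of each weight---goes through verbatim. Everything else (the decomposition of $B_1$, integrability of each $V(\lambda_d')$, finiteness of $D$) has already been established earlier in the excerpt, so the proof is short: assemble these, apply the finite-type input to each summand, and sum the dimensions.
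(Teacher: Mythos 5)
Your proposal is correct and follows essentially the same route as the paper's proof: both reduce the claim to the finite-dimensionality of the Weyl modules $L(\lambda_{d}')$ over the finite-type sub-datum $\mathfrak{J}$ (citing Lusztig's Proposition~6.3.4) and then conclude via Proposition~\ref{s:Bstr:B1integ} together with the finiteness of $D$. Your additional remark that integrability alone would suffice is a harmless refinement; the paper simply uses the $J$-dominance of the weights $\lambda_{d}'$ and the cited result directly.
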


\begin{proof}
Each root datum contains the information of a Cartan datum and correspondingly a sub-root datum contains the information of a sub-Cartan datum, defined in the obvious way.  Then since $\mathfrak{T}$ is a finite type Cartan datum, every sub-Cartan datum $\mathfrak{J}$ of $\mathfrak{T}$ is also of finite type.  Hence by \cite[Proposition~6.3.4]{LusztigBook} for every $J$-dominant weight $\lambda'$ the associated Weyl $\qea{J}$-module $L(\lambda')$ is finite-dimensional.  (Recall here the definition of $J$-dominant from Section~\ref{s:Bstr:ss:Bmodstr}.)   Therefore, by Proposition~\ref{s:Bstr:B1integ}, $B_{1}$ is finite-dimensional, since it is a direct sum of finitely many quotients of Weyl modules.
\end{proof}

\noindent This need not be the case in general, for example if $\mathfrak{T}$ is affine type and $\mathfrak{J}$ is finite type.

We also note the following fact:

\begin{lemma}
Let $\mathfrak{J} \subseteq_{\iota} \mathfrak{T}$ and let $B=B(\mathfrak{T},\mathfrak{J},\iota,q)$ be the associated graded Hopf algebra in the braided category of $D(\qebO{T})$-modules.  Then $B$ has finite-dimensional homogeneous components if and only if the index $|\mathfrak{T}\colon \mathfrak{J}|_{q}$ of $\mathfrak{J}$ in $\mathfrak{T}$ is finite.
\end{lemma}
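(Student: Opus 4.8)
\textit{Proof proposal.}

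The plan is to deduce both implications directly from the fact that $B=\dsum_{n\in\nat}B_{n}$ is $\nat$-graded and generated as an algebra in degree one, together with Definition~\ref{s:defnofB:d:corank}, which identifies the index $|\mathfrak{T}\colon\mathfrak{J}|_{q}$ with $\dim B_{1}$.

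The ``only if'' direction is immediate: if every homogeneous component $B_{n}$ is finite-dimensional, then in particular $B_{1}$ is, and hence $|\mathfrak{T}\colon\mathfrak{J}|_{q}=\dim B_{1}$ is finite.

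For the converse, suppose $|\mathfrak{T}\colon\mathfrak{J}|_{q}=\dim B_{1}=:N<\infty$. By Corollary~\ref{s:Bstr:B1generatesB} (a consequence of Theorem~\ref{s:Bstr:alggensofB}), $B$ is generated as a unital algebra by $\widetilde{B_{1}}=B_{1}\dsum k1$, equivalently by $B_{1}$. Fix $n\geq 1$ and let $b\in B_{n}$. Then $b$ is a $k$-linear combination of products of elements of $B_{1}$ (together with scalars); each such product is homogeneous, and a product of exactly $k$ elements of $B_{1}$ lies in $B_{k}$ by the grading. Discarding the homogeneous parts of degree $\neq n$, only products of exactly $n$ elements of $B_{1}$ survive, so the $n$-fold multiplication map $B_{1}^{\tensor n}\to B_{n}$ is surjective. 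Therefore $\dim B_{n}\leq N^{n}<\infty$; together with $B_{0}=k$ this shows all homogeneous components of $B$ are finite-dimensional.

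The argument is short precisely because the substantive input has already been established: Theorem~\ref{s:Bstr:alggensofB}/Corollary~\ref{s:Bstr:B1generatesB} guarantees that $B$ is generated in degree one for \emph{every} sub-root datum, with no finiteness or irreducibility hypothesis. The only remaining point is the elementary grading bookkeeping that presents $B_{n}$ as a quotient of $B_{1}^{\tensor n}$, and this presents no real obstacle.

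\begin{proof}
Immediate from Definition~\ref{s:defnofB:d:corank} and Corollary~\ref{s:Bstr:B1generatesB}. If each $B_{n}$ is finite-dimensional then so is $B_{1}$, whence $|\mathfrak{T}\colon\mathfrak{J}|_{q}=\dim B_{1}<\infty$. Conversely, if $\dim B_{1}=N<\infty$, then since $B$ is generated as an algebra by $B_{1}$ (Corollary~\ref{s:Bstr:B1generatesB}) and $\nat$-graded with the generators in degree one, the $n$-fold multiplication map $B_{1}^{\tensor n}\to B_{n}$ is surjective for every $n\geq 1$; hence $\dim B_{n}\leq N^{n}<\infty$, and $\dim B_{0}=\dim k=1$.
\end{proof}
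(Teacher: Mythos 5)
Your proof is correct and is precisely the argument the paper leaves implicit (the lemma is stated there without proof, as an immediate consequence of Corollary~\ref{s:Bstr:B1generatesB} and the definition of the index as $\dim B_{1}$). The key step --- that generation by $B_{1}$ in an $\nat$-graded algebra forces the multiplication map $B_{1}^{\tensor n}\to B_{n}$ to be surjective, whence $\dim B_{n}\leq(\dim B_{1})^{n}$ --- is exactly the intended bookkeeping.
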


Finally, we can combine all our previous work to prove the following theorem.

\begin{theorem}\label{s:Bstr:BisNichols}
Let $\mathfrak{J} \subseteq_{\iota} \mathfrak{T}$ and let $B=B(\mathfrak{T},\mathfrak{J},\iota,q)$ be the associated graded Hopf algebra in the braided category of $D(\qebO{T})$-modules.  If the index $|\mathfrak{T} \colon \mathfrak{J}|_{q}$ of $\mathfrak{J}$ in $\mathfrak{T}$ is finite then $B$ is a Nichols algebra.
\end{theorem}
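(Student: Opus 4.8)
The plan is to verify the three defining conditions of a Nichols algebra (Definition~\ref{s:prelims:d:Nicholsalg}) for $B=B(\mathfrak{T},\mathfrak{J},\iota,q)$, using the structural results already established. Conditions \textit{(a)} and \textit{(c)} are essentially in hand: we observed in Section~\ref{ss:grandsplproj} that $B_{0}=k$, which is condition \textit{(a)}, and Corollary~\ref{s:Bstr:B1generatesB} tells us that $B$ is generated as an algebra by $B_{1}$ (together with $B_{0}=k1$), which is condition \textit{(c)}. The content of the theorem is therefore condition \textit{(b)}, the equality $B_{1}=\brprim{B}$. We already know the inclusion $B_{1}\subseteq\brprim{B}$ holds in general (again from Section~\ref{ss:grandsplproj}, using that $\bcoproduct$ is a graded map and $B_{0}=k$), so the real task is the reverse inclusion $\brprim{B}\subseteq B_{1}$.

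First I would invoke the hypothesis: the index $|\mathfrak{T}\colon\mathfrak{J}|_{q}=\dim B_{1}$ being finite means, by the last Lemma of the preceding subsection, that $B$ has finite-dimensional homogeneous components. This is precisely the finiteness assumption needed to bring Lemma~\ref{s:prelims:l:Nicholsifdualdeg1gen} into play. That lemma says: for a graded braided Hopf algebra $R$ with finite-dimensional homogeneous components, $R_{0}=k$, and whose graded dual $S=\bigdsum_{n}\dual{R_{n}}$ is again a Hopf algebra in the same braided category (with the dual structures), one has $R_{1}=\brprim{R}$ if and only if $S$ is generated as an algebra by $S_{1}$.

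So the plan reduces to applying Lemma~\ref{s:prelims:l:Nicholsifdualdeg1gen} with $R=B$ and $S=B'$. Two things must be checked. One: that $B'$, the graded dual of $B$, really is a Hopf algebra in the same braided category with the dual structures — this is exactly the object $B'$ (equivalently $\op{(B')}$) produced in Section~\ref{s:defnofB:ss:uqIisaDB} from the self-dual Hopf algebra $\qebplus{T}$ by the same split-projection machinery, and $B$, $B'$ are dually paired via Lusztig's pairing, as recorded just before Theorem~\ref{s:defnofB:isotoDB}; one should note that finite-dimensionality of the homogeneous components makes this pairing perfect degreewise, so $B'$ is genuinely the graded dual. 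Two: that $B'$ is generated in degree one. But $B'$ arises from $\qebplus{T}$ by exactly the construction that Theorem~\ref{s:Bstr:alggensofB} and Corollary~\ref{s:Bstr:B1generatesB} analysed for $B$ arising from $\qeb{T}$; by the evident symmetry $E_{i}\leftrightarrow F_{i}$, $\qebplus{T}\leftrightarrow\qeb{T}$, the identical argument shows $B'$ is generated as an algebra by $B'_{1}=(B')_{1}$. Feeding this into Lemma~\ref{s:prelims:l:Nicholsifdualdeg1gen} yields $B_{1}=\brprim{B}$, which is condition \textit{(b)}, and combined with conditions \textit{(a)} and \textit{(c)} already noted, $B$ is a Nichols algebra.

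I expect the main obstacle to be the second checkpoint above: cleanly justifying that the degree-one generation of $B'$ follows from the same reasoning as for $B$. One must make sure the roles of $\qeb{T}$ and $\qebplus{T}$ are genuinely interchangeable in the split-projection setup — that the $\nat$-grading on $\qebplus{T}$ via $\chi_{D}$, the identification of the zeroth component as $\qebplus{J}\rtimes k[Z/Z']$, and the resulting version of Theorem~\ref{s:Bstr:alggensofB} all go through verbatim with $E$'s in place of $F$'s. This is routine given the symmetry of the defining relations and the self-duality pairing, but it is the step that carries the actual weight of the proof; everything else is bookkeeping or a direct citation of an earlier result. A secondary point worth a sentence is confirming that Lemma~\ref{s:prelims:l:Nicholsifdualdeg1gen}'s hypothesis ``$S$ is also a Hopf algebra in $\curly{C}$ with the dual structures'' is met in our braided category $\curly{C}$ of $\widetilde{\qea{J}}$-modules (or $D(\qebO{T})$-modules), which follows because $B'$ was constructed in precisely that category.
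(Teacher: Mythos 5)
Your proposal is correct and follows essentially the same route as the paper: conditions \textit{(a)} and \textit{(c)} by citation, then condition \textit{(b)} via Lemma~\ref{s:prelims:l:Nicholsifdualdeg1gen} applied to the graded dual $B'$, whose degree-one generation is obtained by the $E_{i}\leftrightarrow F_{i}$ symmetry of the argument of Theorem~\ref{s:Bstr:alggensofB}. The point you flag as carrying the weight --- that the bosonisation argument transfers verbatim to $\qebplus{T}$ --- is exactly the step the paper also singles out.
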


\begin{proof}
Recall that we must show the following:
\renewcommand{\labelenumi}{\textup{(\hspace{-1pt}}\textit{\alph{enumi}}\textup{\hspace{-0.5pt})}\ }
\begin{enumerate}
\item $B_{0}=k$,
\item $B_{1}=\brprim{B}$, and
\item $B$ is generated as an algebra by $B_{1}$.
\end{enumerate}
\renewcommand{\labelenumi}{\textit{\roman{enumi}}\textup{)}\ }

Condition (\hspace{-1pt}\textit{a}\hspace{-0.5pt}) was discussed at the end of Subsection~\ref{ss:grandsplproj} and condition (\hspace{-1pt}\textit{c}\hspace{-0.5pt}) is Corollary~\ref{s:Bstr:B1generatesB}.  In order to show (\hspace{-1pt}\textit{b}\hspace{-0.5pt}), we make use of Lemma~\ref{s:prelims:l:Nicholsifdualdeg1gen}, which tells us that if we can show that the graded dual of $B$ is generated in degree one then $B$ satisfies (\hspace{-1pt}\textit{b}\hspace{-0.5pt}).  Note that by the preceding lemma, our hypothesis on the finiteness of the index of $\mathfrak{J}$ in $\mathfrak{T}$ means that the condition of Lemma~\ref{s:prelims:l:Nicholsifdualdeg1gen} regarding the finite-dimensionality of the homogeneous components of $B$ is satisfied.

Now we need to identify the graded dual of $B$.  But we have already found this: it is the braided Hopf algebra $B'$ that occurs in Theorem~\ref{s:defnofB:isotoDB}, such that $\qea{T} \iso B \rtimesdot \widetilde{\qea{J}} \ltimesdot \op{(B')}$.  Both $B$ and $B'$ are $\widetilde{\qea{J}}$-modules and indeed are Hopf algebras in the braided category of such modules.  By the same arguments as for $B$, $B'$ is graded.  Lusztig's pairing is graded and so we see that $B'$ is the graded dual of $B$.  Of course, this is just seeing the symmetry in the positive and negative parts of the quantized enveloping algebras.

Moreover, a proof exactly analogous to that of Theorem~\ref{s:Bstr:alggensofB} shows that $B'$ is generated in degree one: we have a corresponding bosonisation involving $B'$ and the argument there is essentially only dependent on the semi-direct algebra structure of the bosonisation.

Then by Lemma~\ref{s:prelims:l:Nicholsifdualdeg1gen}, we are done and $B$ is a Nichols algebra when the sub-root datum has finite index.
\end{proof}

We remark that the graded dual $B'$ is also a Nichols algebra, by general principles.  So, although we have worked throughout with $\qeb{T}$, we obtain the same results if we consider $\qebplus{T}$.

We conclude with an example.

\begin{example*}  Let $k=\complex$ and take $q\in \dual{\complex}$ such that $q^2\neq 1$.  Let $A_{3}$ be the standard root datum associated to $\Lie{sl}_{4}(\complex)$ and $A_{2}$ that associated to $\Lie{sl}_{3}(\complex)$.  Let $B=B(A_{3},A_{2},\iota,q)$ with $\iota(j)=j$.  Then $B_{1}$ is generated as a $\widetilde{\qea{sl_{\mathrm{3}}}}$-module, with the adjoint action, by $\{ F_{3}K_{3} \}$.  Below, the notation $\qbracket{\ }{\ }{q}$ denotes the $q$-commutator, $\qbracket{x}{y}{q}=xy-qyx$.  A basis for the module is given by $\{ b_{1}, b_{2}, b_{3} \}$ with
\begin{align*}
b_{1} & =F_{3}K_{3}, \\
b_{2} & =\Adj{F_{2}}{b_{1}}=\qbracket{F_{2}}{F_{3}}{q}K_{2}K_{3},\ \text{and} \\
b_{3} & =\Adj{F_{2}}{b_{2}}=\qbracket{F_{1}}{\qbracket{F_{2}}{F_{3}}{q}}{q}K_{1}K_{2}K_{3}.
\end{align*}
The rest of the adjoint action $\Ad$ is as follows:
\begin{align*}
\Adj{E_{2}}{b_{2}} & =b_{1} & \Adj{K_{1}}{b_{1}} & = b_{1} & \Adj{K_{2}}{b_{1}} & = qb_{1} \\
\Adj{E_{1}}{b_{3}} & =b_{2} & \Adj{K_{1}}{b_{2}} & = qb_{2} & \Adj{K_{2}}{b_{2}} & =\inv{q}b_{2} \\
 & & \Adj{K_{1}}{b_{3}} & = \inv{q}b_{3} & \Adj{K_{2}}{b_{3}} & = b_{3} 
\end{align*}
\[ \Adj{F_{1}}{b_{1}} =\Adj{F_{2}}{b_{2}}=\Adj{F_{1}}{b_{3}}=\Adj{F_{2}}{b_{3}}=0 \]
\[ \Adj{E_{1}}{b_{1}} =\Adj{E_{2}}{b_{1}}=\Adj{E_{1}}{b_{2}}=\Adj{E_{2}}{b_{3}}=0 \]

This may be represented graphically as follows, with arrows for non-zero actions (not including the $K_{i}$):
\[ \xymatrix@1@M=5pt{{b_{1}} \ar@<1ex>[r]^{F_{2}} & {b_{2}} \ar@<1ex>[l]^{E_{2}} \ar@<1ex>[r]^{F_{1}} & {b_{3}} \ar@<1ex>[l]^{E_{1}} } \]

Then $B$ is generated as an algebra by $b_{1}$, $b_{2}$ and $b_{3}$, as above, subject to the relations 
\[ \qbracket{b_{i}}{b_{j}}{q} = 0 \quad \mathrm{for}\ i<j. \]
These relations were obtained by explicit calculation, with the assistance of the computer program GAP (\cite{GAP4}).

We also give the braiding $\Psi|_{B_{1}}\colon B_{1} \btensor B_{1} \to B_{1} \btensor B_{1}$ on the basis elements:
\begin{align*}
\Psi(b_{i}\tensor b_{i}) & = q^{-2}b_{i}\tensor b_{i} \quad \text{for $1 \leq i \leq 3$} \\
\Psi(b_{i}\tensor b_{j}) & = q^{-1}b_{j}\tensor b_{i}  \quad \text{for $1 \leq i < j \leq 3$} \\
\Psi(b_{j}\tensor b_{i}) & = (q^{-2}-1)b_{j}\tensor b_{i}+q^{-1}b_{i}\tensor b_{j}  \quad \text{for $1 \leq i < j \leq 3$} 
\end{align*}

\noindent In fact, $\Psi$ is of Hecke type and so $B$ is isomorphic to the quantum symmetric algebra $S_{q}(V)$ for $V$ of dimension 3 over $\complex$, or the quotient of this by $N$th powers of the generators if $q$ is an $N$th root of unity.  This braided Hopf algebra is quadratic and Koszul by \cite[Proposition 3.4]{AndrSchnPHA}.

We see from the above formul\ae\ that in the limit as $q\to 1$, the braiding $\Psi$ becomes $\tau$, the tensor product flip map, $\tau\colon a\tensor b \mapsto b\tensor a$.  Then $\beta(A_{3},A_{2},\iota)=S(V)$, the ordinary symmetric algebra on $V$.  The Lie algebra $\Lie{n}^{-}_{D}$ is the three-dimensional natural $\Lie{sl}_{3}$-module $V$ with the zero Lie bracket (\ie it is an Abelian Lie algebra).  The universal enveloping algebra $U(\Lie{n}^{-}_{D})$ is therefore $S(V)$ and so the interpretation of $B$ as $\qea{n_{\mathit{D}}^{-}}$ is very explicit in this case.
\end{example*}

\section*{Concluding remarks}

In addition to the motivations described in the introduction, there are several other potential applications of these results concerning bases of various types and non-standard quantum groups.  We briefly describe these now.

We concur with Majid (\cite{MajidNewConstr},\cite{QGP}) that an interesting area for future work would be to attempt to identify quantum groups obtained by the inductive construction that do not have classical counterparts---purely quantum phenomena.  One would do this by asking about the possible braided Hopf algebra structures that could be fed into a double-bosonisation to yield an object ``close to'' a quantized enveloping algebra.  (We compare this with the investigations in \cite{LieInduction}, which asked which braided-Lie bialgebras could be used to obtain simple Lie algebras.)  It is certainly not clear how one might make sense of the phrase ``close to'' here, in order to adequately describe the category of objects under consideration.  We would also want to know which properties were shared with quantized enveloping algebras.  However, one might reasonably start by constructing explicit examples, as was done in \cite{MajidNewConstr}, extending $U_{q}(\Lie{su}_{2})$ by a ``fermionic'' braided super-plane $\complex_{q}^{0|2}$ instead of the standard braided plane $\complex_{q}^{2}$ that gives $U_{q}(\Lie{su}_{3})$.  A starting point might be to only consider Nichols algebras of integrable modules of quantized enveloping algebras, \ie the situation we have described here.

We also remark that much of what we have done here is exactly mirrored for the reduced form of $\qea{T}$, $u_{q}(\Lie{T})$, when $q$ is a root of unity (see \cite[Chapter~36]{LusztigBook}).  Then when $u_{q}(\Lie{T})$ is finite-dimensional (\eg for $\Lie{T}$ of finite type), the associated Nichols algebras are also finite-dimensional and hence are subsumed into the large amount of theory developed for this case (see for example \cite{AndrSchnPHA} and \cite{AndrNichols}).  In particular, this work is closely related to that in \cite{AndrSchnFD-PHA}.

An significant outstanding problem is to understand fully the relations in the Nichols algebras we have identified.   We may be aided in this by recent work of Ufer (\cite{UferUqgmod}) on Nichols algebras of $\qea{\Lie{g}}$-modules.  He studies Nichols algebras of integrable modules and in particular their Gel\cprime fand--Kirillov dimension and their defining relations, subject to certain assumptions.  The principal assumption made is that the braiding is of a special type, called strong exponential type.  Also, in earlier work Ufer (\cite{UferPBW}) noted that braidings of the type we obtain here are triangular and that consequently one has Poincar\'{e}--Birkoff--Witt-type (PBW-type) bases on the Nichols algebras, at least in the generic case and for finite types.

Ufer's work and our own are in parallel: the approach here was to try to find the most general results possible about the braided Hopf algebras arising from deleting nodes, whereas Ufer considers Nichols algebras over general integrable modules.  Clearly there is much of common interest but not all of those Nichols algebras considered by Ufer will appear in our setting and not all of the braided Hopf algebras appearing here are known to be Nichols algebras.  We would like to have more precise information about the braiding associated to $B$, especially outside the finite-type case.

In describing the inductive construction of quantum groups, we have had in mind the possibility of using this to provide an alternative approach to the proof of various properties of quantized enveloping algebras.  For example, the existence of Poincar\'{e}--Birkoff--Witt-type (PBW-type) bases was discussed by Ufer.  One can see, as noted by Majid (\cite{MajidInductive}), that once one understands the structure of $B$ then one can re-prove the existence of the PBW-type bases inductively, deleting (or adding) one node at a time.  The study of the canonical basis should also be approachable inductively, with the base case being rank one root data.  Then for the inductive step, one should show that such a basis is induced on $B$.  The interpretation of $B$ as a braided enveloping algebra over a quantized enveloping algebra module (indeed, often a finite-dimensional module) makes this plausible.

\subsection*{Acknowledgements}

I am very grateful to Shahn Majid for the initial suggestions that led to the work here and for much help and encouragement during its completion.  I also gratefully acknowledge the assistance provided by Jonathan Dixon, particularly relating to the representation-theoretic aspects of this work.  I would also like to thank Stefan Kolb for several helpful conversations.

The majority of the work in this paper has appeared in the author's PhD thesis (\cite{Thesis}), completed at Queen Mary, University of London under the supervision of Prof.\ Majid and funded by an EPSRC Doctoral Training Account.  The remainder has been completed during the author's research fellowship at Keble College, Oxford.  I would also like to thank the Mathematical Institute at Oxford for its provision of facilities.

I would also like to thank the referee for correcting an error in a previous version and for several other helpful comments.

\bibliographystyle{elsarticle-num}
\bibliography{references}\label{references}

\def\cprime{$'$} \newcommand{\noopsort}[1]{}
\begin{thebibliography}{10}
\expandafter\ifx\csname url\endcsname\relax
  \def\url#1{\texttt{#1}}\fi
\expandafter\ifx\csname urlprefix\endcsname\relax\def\urlprefix{URL }\fi
\expandafter\ifx\csname href\endcsname\relax
  \def\href#1#2{#2} \def\path#1{#1}\fi

\bibitem{DrinfeldHAQYBE}
V.~G. Drinfel{\cprime}d, Hopf algebras and the quantum {Y}ang-{B}axter
  equation, Dokl. Akad. Nauk SSSR 283~(5) (1985) 1060--1064.

\bibitem{Jimbo}
M.~Jimbo, A {$q$}-difference analogue of {$U({\mathfrak g})$} and the
  {Y}ang-{B}axter equation, Lett. Math. Phys. 10~(1) (1985) 63--69.

\bibitem{LusztigBook}
G.~Lusztig, Introduction to quantum groups, Vol. 110 of Progress in
  Mathematics, Birkh\"auser Boston Inc., Boston, MA, 1993.

\bibitem{AndrSchnPHA}
N.~Andruskiewitsch, H.-J. Schneider, Pointed {H}opf algebras, in: New
  directions in Hopf algebras, Vol.~43 of Math. Sci. Res. Inst. Publ.,
  Cambridge Univ. Press, Cambridge, 2002, pp. 1--68.
\newblock \href {http://arxiv.org/abs/math.QA/0110136}
  {\path{arXiv:math.QA/0110136}}.

\bibitem{MajidInductive}
S.~Majid, Braided geometry and the inductive construction of {L}ie algebras and
  quantum groups, in: Deformation theory and symplectic geometry (Ascona,
  1996), Vol.~20 of Math. Phys. Stud., Kluwer Acad. Publ., Dordrecht, 1997, pp.
  339--344.
\newblock \href {http://arxiv.org/abs/q-alg/9611006}
  {\path{arXiv:q-alg/9611006}}.

\bibitem{MajidNewConstr}
S.~Majid, New quantum groups by double-bosonisation, Czechoslovak J. Phys.
  47~(1) (1997) 79--90, quantum groups and integrable systems, II (Prague,
  1996).
\newblock \href {http://arxiv.org/abs/q-alg/9610004}
  {\path{arXiv:q-alg/9610004}}.

\bibitem{DoubleBos}
S.~Majid, Double-bosonisation of braided groups and the construction of {$U\sb
  q({\mathfrak g})$}, Math. Proc. Cambridge Philos. Soc. 125~(1) (1999)
  151--192.
\newblock \href {http://arxiv.org/abs/q-alg/9511001}
  {\path{arXiv:q-alg/9511001}}.

\bibitem{Sommerhauser}
Y.~Sommerh{\"a}user, Deformed enveloping algebras, New York J. Math. 2 (1996)
  35--58, electronic.

\bibitem{QGP}
S.~Majid, A {Q}uantum {G}roups {P}rimer, no. 292 in London Mathematical Society
  Lecture Note Series, Cambridge University Press, Cambridge, 2002.

\bibitem{Rosso}
M.~Rosso, Quantum groups and quantum shuffles, Invent. Math. 133~(2) (1998)
  399--416.

\bibitem{BraidedLie}
S.~Majid, Braided-{L}ie bialgebras, Pacific J. Math. 192~(2) (2000) 329--356.
\newblock \href {http://arxiv.org/abs/q-alg/9703004}
  {\path{arXiv:q-alg/9703004}}.

\bibitem{LieInduction}
J.~E. Grabowski, On {L}ie induction and the exceptional series, J. Algebra
  Appl. 4~(6) (2005) 707--737, {S}pecial Issue on Representation Theory and Its
  Applications.
\newblock \href {http://arxiv.org/abs/math.QA/0409359}
  {\path{arXiv:math.QA/0409359}}.

\bibitem{BraidedKacMoody}
J.~E. Grabowski, Braided-{L}ie bialgebras associated to {K}ac--{M}oody
  algebras, J. Lie Theory 18~(1) (2008) 125--140.
\newblock \href {http://arxiv.org/abs/0708.4200} {\path{arXiv:0708.4200}}.

\bibitem{Takeuchi}
M.~Takeuchi, A survey on {N}ichols algebras, in: Algebraic structures and their
  representations, Vol. 376 of Contemp. Math., Amer. Math. Soc., Providence,
  RI, 2005, pp. 105--117.

\bibitem{Kolb}
S.~Kolb, The {AS}-{C}ohen-{M}acaulay property for quantum flag manifolds of
  minuscule weight, J. Algebra 319~(8) (2008) 3518--3534.

\bibitem{MacLane}
S.~Mac~Lane, Categories for the working mathematician, 2nd Edition, Vol.~5 of
  Graduate Texts in Mathematics, Springer-Verlag, New York, 1998.

\bibitem{Nichols}
W.~D. Nichols, Bialgebras of type one, Comm. Algebra 6~(15) (1978) 1521--1552.

\bibitem{Woron}
S.~L. Woronowicz, Differential calculus on compact matrix pseudogroups (quantum
  groups), Comm. Math. Phys. 122~(1) (1989) 125--170.

\bibitem{AndrNichols}
N.~Andruskiewitsch, Some remarks on {N}ichols algebras, in: Hopf algebras, Vol.
  237 of Lecture Notes in Pure and Appl. Math., Dekker, New York, 2004, pp.
  35--45.
\newblock \href {http://arxiv.org/abs/math.QA/0301064}
  {\path{arXiv:math.QA/0301064}}.

\bibitem{AndrSchnFinQGps}
N.~Andruskiewitsch, H.-J. Schneider, Finite quantum groups and {C}artan
  matrices, Adv. Math. 154~(1) (2000) 1--45.

\bibitem{MajidBiXprod}
S.~Majid, More examples of bicrossproduct and double cross product {H}opf
  algebras, Israel J. Math. 72~(1-2) (1990) 133--148.

\bibitem{MajidCrossProducts}
S.~Majid, Cross products by braided groups and bosonization, J. Algebra 163~(1)
  (1994) 165--190.

\bibitem{Jantzen}
J.~C. Jantzen, Lectures on quantum groups, Vol.~6 of Graduate Studies in
  Mathematics, American Mathematical Society, Providence, RI, 1996.

\bibitem{DrinfeldICM}
V.~G. Drinfel{\cprime}d, Quantum groups, in: Proceedings of the International
  Congress of Mathematicians, Vol. 1, 2 (Berkeley, Calif., 1986), Amer. Math.
  Soc., Providence, RI, 1987, pp. 798--820.

\bibitem{DeConciniLyubashenko}
C.~De~Concini, V.~Lyubashenko,
  \href{http://dx.doi.org/10.1006/aima.1994.1071}{Quantum function algebra at
  roots of {$1$}}, Adv. Math. 108~(2) (1994) 205--262.
\newblock \href {http://dx.doi.org/10.1006/aima.1994.1071}
  {\path{doi:10.1006/aima.1994.1071}}.
\newline\urlprefix\url{http://dx.doi.org/10.1006/aima.1994.1071}

\bibitem{Brown-Goodearl}
K.~A. Brown, K.~R. Goodearl, Lectures on algebraic quantum groups, Advanced
  Courses in Mathematics. CRM Barcelona, Birkh\"auser Verlag, Basel, 2002.

\bibitem{FQGT}
S.~Majid, Foundations of quantum group theory, Cambridge University Press,
  Cambridge, 1995.

\bibitem{Radford}
D.~E. Radford, The structure of {H}opf algebras with a projection, J. Algebra
  92~(2) (1985) 322--347.

\bibitem{MajidBraidedMatrix}
S.~Majid, Braided matrix structure of the {S}klyanin algebra and of the quantum
  {L}orentz group, Comm. Math. Phys. 156~(3) (1993) 607--638.
\newblock \href {http://arxiv.org/abs/hep-th/9208008}
  {\path{arXiv:hep-th/9208008}}.

\bibitem{Robinson}
D.~J.~S. Robinson, A course in the theory of groups, 2nd Edition, Vol.~80 of
  Graduate Texts in Mathematics, Springer-Verlag, New York, 1996.

\bibitem{Kac}
V.~G. Kac, Infinite-dimensional {L}ie algebras, 3rd Edition, Cambridge
  University Press, Cambridge, 1990.

\bibitem{Joseph}
A.~Joseph, Quantum groups and their primitive ideals, Vol.~29 of Ergebnisse der
  Mathematik und ihrer Grenzgebiete (3) [Results in Mathematics and Related
  Areas (3)], Springer-Verlag, Berlin, 1995.

\bibitem{GAP4}
The GAP~Group, {GAP -- Groups, Algorithms, and Programming, Version 4.4},
  \mbox{\url{http://www.gap-system.org/}}.

\bibitem{AndrSchnFD-PHA}
N.~Andruskiewitsch, H.-J. Schneider, On the classification of
  finite-dimensional pointed {H}opf algebras, Ann. of Math. 171~(1) (2010)
  375--417.

\bibitem{UferUqgmod}
S.~Ufer, Nichols algebras of ${U}_{q}(\mathfrak{g})$-modules, preprint (2004).
\newblock \href {http://arxiv.org/abs/math.QA/0403282}
  {\path{arXiv:math.QA/0403282}}.

\bibitem{UferPBW}
S.~Ufer, P{BW} bases for a class of braided {H}opf algebras, J. Algebra 280~(1)
  (2004) 84--119.
\newblock \href {http://arxiv.org/abs/math.QA/0311504}
  {\path{arXiv:math.QA/0311504}}.

\bibitem{Thesis}
J.~E. Grabowski, Inductive constructions for {L}ie bialgebras and {H}opf
  algebras, Ph.D. thesis, University of London (2006).

\end{thebibliography}

\end{document}